\title[Weak Geodesics for deformed Hermitian-Yang-Mills]{Weak Geodesics for the deformed Hermitian-Yang-Mills equation}
\author[A. Jacob]{Adam Jacob}
\thanks{Research supported in part by a Simons Collaboration Grant.}
\address{Department of Mathematics, University of California davis, 1 Shields Ave., Davis, CA 95616}
 \email{ajacob@math.ucdavis.edu}
\theoremstyle{plain}
\newtheorem{thm}{Theorem}[section]
\newtheorem{prop}[thm]{Proposition}
\newtheorem{defn}[thm]{Definition}
\newtheorem{lem}[thm]{Lemma}
\theoremstyle{definition}
\numberwithin{equation}{section}
\flushbottom \thispagestyle{empty} \pagestyle{plain}
\renewcommand{\thanks}[1]{\footnote{#1}} 
\newcommand{\be}{\begin{equation}}
\newcommand{\bea}{\begin{eqnarray}}
\newcommand{\eea}{\end{eqnarray}} \newcommand{\ee}{\end{equation}}
 \def\ba{\begin{eqnarray}}
\def\ea{\end{eqnarray}}
\def\log{\,{\rm log}\,}
\def\al{\alpha}
\def\ov{\overline}
\def\ti{\tilde}
\def\cF{{\mathcal F}}
\def\[{{\bf [}}
\def\]{{\bf ]}}
\def\pl{\partial}
\begin{document}

\maketitle

\begin{centering}

{\it Dedicated with gratitude to D.H. Phong on the occasion of his 65th birthday.}

\end{centering}

\vspace{.1in}

\begin{abstract}
We study weak geodesics in the space of  potentials for the deformed Hermitian-Yang-Mills equation. The geodesic equation can be formulated as a degenerate elliptic equation, allowing us to employ nonlinear Dirichlet duality theory,  as developed by  Harvey-Lawson. By exploiting the convexity of the level sets of the Lagrangian angle operator in the highest branch, we are able to construct $C^0$ solutions of the associated Dirichlet problem. 
\end{abstract}

\begin{normalsize}
\section{Introduction}

The goal of this paper is to formulate a theory of weak  geodesics in the space of potentials for the deformed Hermitian-Yang-Mills equation, and prove existence of a continuous solution to the corresponding Dirichlet problem. This  is motivated by recent progress on several geometric  PDEs relating existence to notions of stability arising from Geometric Invariance Theory. Most notably, the work of Chen-Donaldson-Sun \cite{CDS2,CDS3,CDS4}, which establishes existence of K\"ahler-Einstein metrics on K-stable Fano manifolds, can be connected to the theory of geodesics on the space of K\"ahler potentials developed by Donaldson \cite{D}, Mabuchi \cite{M} and Semmes \cite{S}. Specifically, recent work by  Berman-Boucksom-Jonsson \cite{BBJ} gives a different proof of the Chen-Donaldson-Sun theorem using the existence of $C^{1,\alpha}$ geodesics in the space of K\"ahler potentials, in relation to properness of certain functionals. 

Let $(X,\omega)$ be a compact K\"ahler manifold of complex dimension $n$, and let $[\alpha]\in H^{1,1}(X,\mathbb R)$ be a class with a fixed representative $\alpha$. The deformed Hermitian-Yang-Mills (dHYM) equation seeks a representative $\alpha_\phi\in[\alpha]$ solving:
 \be
 \label{DHYM1}
 {\rm Im}\left(e^{-i\hat\theta}(\omega+i\alpha_\phi)^n\right)=0
 \ee
for a fixed constant $\hat\theta$. This equation first appeared in the physics literature and was derived by considering open-string effective actions and studying BPS conditions \cite{MMMS}. Similar deformed equations also appeared in the works of N.C. Leung by looking at vector bundles over a symplectic manifold and considering moment maps of various differential forms on the space of connections \cite{Leung1,Leung2}. Later Leung-Yau-Zaslow considered equation \eqref{DHYM1} in the context of the semi-flat setup from SYZ mirror symmetry \cite{LYZ}. They proved solutions of \eqref{DHYM1} correspond via the Fourier-Mukai Transform to special Lagranagian graphs in the mirror manifold. This connection to special Lagrangians provides both insight into the structure of solutions to \eqref{DHYM1}, and inspiration that methods for studying \eqref{DHYM1} could shed light on the problem of existence of special Lagrangian submainfolds in general Calabi-Yaus.

A robust study of the dHYM equation in the K\"ahler case was undertaken in  \cite{CJY, JY}, and existence was  proven to be equivalent to an analytic class condition. Ideally, one hopes to replace this analytic condition with an  algebro-geometric stability condition. Recently, an outline of such an approach was given by Collins-Yau \cite{CY}, who derive algebraic obstructions to existence by constructing $C^{1,\alpha}$ geodesics on the space of potentials for $[\alpha]$. Because stability is not the focus of this paper, we direct the reader to   \cite{CY} (and references therein) for a thorough discussion of various notions of stability for  the dHYM equation and other related geometric equations. Instead, we concentrate on the analytic properties of the geodesic equation itself.

 Let $A$ be an annulus in $\mathbb C$. Let $\phi(s,z)$ be a real valued function defined on $M:=A\times X$, with the property that $\phi(s,z) = \phi(|s|,z)$.   Collins-Yau prove $\phi$ is a geodesic if it solves
\be
\label{geos0}
{\rm Im}\left( e^{-i\hat{\theta}}\left(\pi^{*}\omega + i(\pi^{*}\alpha +i D\ov{D} \phi)\right)^{n+1}\right)=0,
\ee
where $\pi$ is the projection onto the second factor of $A\times X$, and $D$ is the differential on $M$. Although this equation looks formally similar to \eqref{DHYM1}, it is in fact degenerate, as the pullback $\pi^{*}\omega$ is not a metric on $M$ and vanishes in the $s$ direction. As a result much care needs to be taken to compute the argument of the top form $\left(\pi^{*}\omega + i(\pi^{*}\alpha +i D\ov{D} \phi)\right)^{n+1}$. Our main result, Theorem \ref{continousgeo} in Section \ref{proof}, is to define a suitable notion of weak solution to \eqref{geos0} and produce $C^0$ solutions with given boundary data.  

Our study of \eqref{geos0} is inspired by the work of Rubinstein-Solomon \cite{RS}, and later Darvas-Rubinstein \cite{DR}, on geodesics in the space of Lagrangian potentials. Both study a similar degenerate elliptic equation, and Rubinstein-Solomon apply the Dirichlet  duality theory of Harvey-Lawson \cite{HL1} to define weak solutions and  solve the corresponding Dirichlet problem. Our equations are formally quite similar, and in our case we can follow \cite{RS} to define a similar space-time Lagrangian angle related to \eqref{geos0}. However, we encounter some major differences in extending their work to solutions of the Dirichlet problem. First and foremost, we work in the complex setting as opposed to a real setting. Here, the work Y. Wang \cite{WYu} (see also  \cite{TWWY}) demonstrates how to adapt real PDE theory  to the case of symmetric matrices which are invariant under  the complex structure.  We are also forced to work on $A\times X$, where $A$ is an annular domain as opposed to a real interval, although this does not pose any serious issues.

The main difficulty we encounter stems from the fact  that our cross section $X$ is  a compact manifold as opposed to a Euclidean  domain. Fortunately, Harvey-Lawson's Dirichlet duality theory extends to compact manifolds \cite{HL2}, which we employ. However, unlike  Euclidean domains, by the maximum principle  compact manifolds do not admit non-trivial strictly subharmonic functions, which can be used to approximate subsolutions to \eqref{geos0} from above. This approximation plays a fundamental role in the comparison principle for the Dirichlet problem and is a key part of the existence proof in \cite{RS}. Instead, in our case we use the fact that the level sets for the angle operator associated to \eqref{DHYM1} are convex in the highest branch, and construct a different approximation using this convexity. Because we employ a relatively general argument, we hope that our techniques carry over to other equations on compact manifolds.

Our paper is organized as follows. In Section \ref{background} we introduce the dHYM equation and derive the corresponding geodesic equation on the space of potentials. In Section \ref{sub} we review the basics of Dirichlet duality theory. We reformulate both the dHYM equation and the geodesic equation into the framework of  Dirichlet duality  theory  in Section \ref{angles}. Finally, Section \ref{proof} contains the proof of Theorem \ref{continousgeo}.

\vspace{\baselineskip}

{\bf Acknowledgements.} We would like to express our gratitude to Tristan C. Collins for many valuable  discussions and comments. We also thank the referee for  helpful suggestions. This work was funded in part by a Simons collaboration grant. We would also like to thank the Institute for Mathematical Sciences at the National University of Singapore, where some of this work took place, for providing a stimulating research environment.

\section{The deformed Hermitian-Yang-Mills equation and geodesics}
\label{background}

Let $(X,\omega)$ be a compact K\"ahler manifold, and let $[\alpha]\in H^{1,1}(X,\mathbb R)$ be a class with a fixed representative $\alpha$. By the $\partial\bar\partial$-lemma, any other representative  can be expressed as $\alpha_\phi=\alpha+i\partial\bar\partial \phi$, for a given potential $\phi$.  We are interested in the problem of finding a $\phi\in C^\infty(X)$ so that the complex function
 \be
\frac{(\omega+{i}\al_\phi)^n}{\omega^n}:X\longrightarrow \mathbb C\nonumber
 \ee
 has constant argument. For this to be possible, one necessary condition is that  the integral
  \be
 Z_{X}:=\int_X{(\omega+{i}\al)^n}\nonumber
 \ee
 lie in $\mathbb C^*$, which we always assume. Note that $Z_X$ only depends on the classes $[\omega]$ and $[\alpha]$, and not on any particular choice of representative.  Setting $\hat\theta:={\rm arg}(Z_X)\in[0,2\pi)$,  the deformed Hermitian-Yang-Mills equation (dHYM)  seeks a function $\phi$ solving
 \be
 {\rm Im} \left({e^{-i\hat\theta}(\omega+{i}\al_\phi)^n}\right)=0.\nonumber
 \ee

This equation can be reformulated as follows. First, define the relative endomorphism $\Lambda_\phi:=\omega^{-1}\alpha_\phi$ which acts on $T^{1,0}X$. Fixing a point $p\in X$ and choosing coordinates so $\omega(p)_{\bar kj}=\delta_{\bar kj}$ and $\alpha_\phi{}_{\bar kj}(p)=\lambda_j\delta_{\bar kj}$,  we can rewrite  our complex function as
\be
\frac{(\omega+{i}\al_\phi)^n}{\omega^n}={\rm det}(Id_{T^{1,0}X}+{i}\Lambda_\phi)=\prod_{k=1}^n(1+i\lambda_k).\nonumber
\ee
The modulus and argument   are given by
\be
r(\alpha_\phi)=\sqrt{\prod_{k=1}^n(1+\lambda_k^2)}\qquad{\rm and}\qquad \Theta(\alpha_\phi)=\sum_{k=1}^n{\rm arctan}(\lambda_k).\nonumber
\ee
By the arctan formulation we see that the angle $\Theta(\alpha_\phi)$ lives in $\mathbb R$ rather than $S^1$. Due to analogy with the case of special Lagrangian graphs (see \cite{HL, CNS}) we refer to $\Theta(\alpha_\phi)$ as the {\it Lagrangian angle}. The dHYM equations can now be expressed as
\be
\label{DHYM2}
\Theta(\alpha_\phi)=\sum_{k=1}^n{\rm arctan}(\lambda_k)=c,\nonumber
\ee
where $c$ is a constant which satisfies $c=\hat\theta$ mod $2\pi$.  A choice of  $c$ is referred to as  {\it branch} of the dHYM equation. Note that if a solution to the dHYM equation exists, the choice of a branch is specified. In general, if one is only given a cohomology class  $[\alpha]$, determining which branch of the equation to work with is a difficult question, as there is no natural lift of ${\rm arg}(Z_X)$ to $\mathbb R$. In this paper, the branch is always specified by assuming an initial analytic condition on $[\alpha]$. Namely, we assume that for fixed representative $\alpha$, the oscillation of the lifted angle $\Theta(\alpha)$ is bounded by $\pi$, which then uniquely specifies which branch $c$ to choose.

The corresponding geodesic equation is the main focus of this paper, which we now introduce. First, consider the space of ``positive" $(1,1)$ forms in $[\alpha]$, which is parametrized by potentials
\be
\label{space}
\mathcal{H} = \{ \phi \in C^{\infty}(X,\mathbb{R}) : {\rm Re}\left(e^{-i\hat{\theta}}(\omega+i\alpha_{\phi})^n\right)>0\}.
\ee
We assume that $\mathcal{H}$ is non-empty. Now, a choice of $\phi\in\mathcal{H}$ gives a corresponding lifted angle $\Theta(\alpha_\phi)$, and using the definition of $\mathcal H$ we see the oscillation of this angle is bounded by $\pi$, which specifies a  choice of a branch $c$ for equation \eqref{DHYM2}.

At  $\phi\in\mathcal{H}$  the tangent space is again  $ C^{\infty}(X,\mathbb{R})$, allowing us to define the following natural  metric.   For $\psi_i \in T_{\phi}\mathcal{H}$, $i =1,2$, set
$$
\langle \psi_1, \psi_2\rangle = \int_{X} \psi_1 \psi_2 {\rm Re}\left(e^{-i\hat{\theta}}(\omega+i\alpha_{\phi})^n\right).
$$
It is with respect to this metric that we consider the following geodesic equation.
\begin{lem}
Let $\phi(t):[0,1]\rightarrow{\mathcal H}$ be a smooth curve. Then $\phi(t)$ is a critical point for arc length if and only if it solves 
\be
\label{geo1}
\ddot{\phi}{\rm Re}\left(e^{-i\hat{\theta}}(\omega+i\alpha_{\phi})^n\right)+ in\pl\dot{\phi}\wedge \ov{\pl}\dot{\phi}\wedge{\rm Im}\left(e^{-i\hat{\theta}}(\omega+i\alpha_{\phi})^{n-1} \right)=0,
\ee
where $\dot\phi$ is used to denote a derivative in time.
\end{lem}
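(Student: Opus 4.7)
The plan is to carry out a direct first-variation computation. Since critical points of arc length coincide, up to reparametrization, with constant-speed critical points of the energy
\[
E(\phi) = \int_0^1 \!\!\int_X \dot\phi^2 \, \mathrm{Re}\bigl(e^{-i\hat\theta}(\omega + i\alpha_\phi)^n\bigr)\, dt,
\]
I will instead compute the Euler--Lagrange equations for $E$ and verify that they match \eqref{geo1}. For a variation $\phi \mapsto \phi + s\eta$ with $\eta$ vanishing at the endpoints $t = 0, 1$, the key input is
\[
\frac{d}{ds}(\omega + i\alpha_{\phi+s\eta})^n\Big|_{s=0} = -n\,\partial\bar\partial\eta \wedge (\omega + i\alpha_\phi)^{n-1},
\]
which contributes two terms to $\tfrac{d}{ds}E|_{s=0}$: one from differentiating $\dot\phi^2$, and one from differentiating the weight factor.

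I then integrate by parts, separately in time and in space. The $\dot\phi \dot\eta$ term, integrated in $t$, produces a factor $\ddot\phi$ together with a leftover piece where $\partial_t$ hits $\mathrm{Re}\bigl(e^{-i\hat\theta}(\omega + i\alpha_\phi)^n\bigr)$. For the $\partial\bar\partial\eta$ term I use the fact that $\omega + i\alpha_\phi$ is $d$-closed, so every power is $d$-closed, to transfer $\partial\bar\partial$ from $\eta$ onto $\dot\phi^2$ via Stokes. Expanding $\partial\bar\partial(\dot\phi^2) = 2\dot\phi\,\partial\bar\partial\dot\phi + 2\,\partial\dot\phi\wedge\bar\partial\dot\phi$ produces two summands; the $\dot\phi\,\partial\bar\partial\dot\phi$ summand is precisely what cancels the leftover $t$-derivative of the weight arising from the first integration by parts. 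After these cancellations,
\[
\tfrac{d}{ds}E\Big|_{s=0} = -2\!\int_0^1\!\!\int_X \eta \Bigl[\ddot\phi\,\mathrm{Re}\bigl(e^{-i\hat\theta}(\omega + i\alpha_\phi)^n\bigr) + n\,\mathrm{Re}\bigl(e^{-i\hat\theta}\partial\dot\phi\wedge\bar\partial\dot\phi\wedge(\omega + i\alpha_\phi)^{n-1}\bigr)\Bigr]\, dt,
\]
so vanishing for all $\eta$ gives an equation of the shape of \eqref{geo1}, modulo identifying the second term.

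To make that identification, I note that $\overline{\partial\dot\phi\wedge\bar\partial\dot\phi} = -\partial\dot\phi\wedge\bar\partial\dot\phi$, so $\partial\dot\phi\wedge\bar\partial\dot\phi$ is purely imaginary and $i\,\partial\dot\phi\wedge\bar\partial\dot\phi$ is a real $(1,1)$ form. Writing $\partial\dot\phi\wedge\bar\partial\dot\phi = i\rho$ with $\rho$ real, and using both $\mathrm{Re}(iz) = -\mathrm{Im}(z)$ and the fact that a real form distributes through $\mathrm{Re}$ and $\mathrm{Im}$, one computes
\[
\mathrm{Re}\bigl(e^{-i\hat\theta}\partial\dot\phi\wedge\bar\partial\dot\phi\wedge(\omega + i\alpha_\phi)^{n-1}\bigr) = i\,\partial\dot\phi\wedge\bar\partial\dot\phi\wedge\mathrm{Im}\bigl(e^{-i\hat\theta}(\omega + i\alpha_\phi)^{n-1}\bigr),
\]
which is exactly the second term of \eqref{geo1}. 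I do not expect a serious obstacle: the argument is a routine variational calculation. The only steps requiring care are the spatial integration by parts (legitimate because $(\omega + i\alpha_\phi)^{n-1}$ is $d$-closed on the \emph{compact} manifold $X$, so no boundary contributions appear), and verifying the clean cancellation between the leftover time derivative of the weight and the $\dot\phi\,\partial\bar\partial\dot\phi$ contribution arising from the spatial integration by parts.
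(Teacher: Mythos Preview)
Your variational computation is correct and complete: the derivative of the weight, the two integrations by parts, the cancellation of the $\dot\phi\,\partial\bar\partial\dot\phi$ terms, and the identification $\mathrm{Re}\bigl(e^{-i\hat\theta}\partial\dot\phi\wedge\bar\partial\dot\phi\wedge(\omega+i\alpha_\phi)^{n-1}\bigr)=i\,\partial\dot\phi\wedge\bar\partial\dot\phi\wedge\mathrm{Im}\bigl(e^{-i\hat\theta}(\omega+i\alpha_\phi)^{n-1}\bigr)$ all check out. The paper does not actually prove this lemma---it simply states that the computation is straightforward and refers the reader to Proposition~2.7 of \cite{CY}---so your argument supplies exactly the details the paper omits, and is the natural (and presumably the same) approach.
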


The proof of the above lemma is a straightforward computation, and all details can be found in Proposition 2.7 in \cite{CY}. Now, similar to the study geodesics in the space of K\"ahler potentials, it is helpful to reformulate \eqref{geo1} as a degenerate elliptic equation on an annular domain in complex dimension $n+1$, as opposed to an equation on $X\times[a,b]$ for some interval $[a,b]\subset\mathbb R$.  
\begin{lem}
\label{reformulation1}
Let $A$ be a annulus in $\mathbb C$. Let $\phi(s,z)$ be a real valued function defined on $M:=A\times X$, with the property that $\phi(s,z) = \phi(|s|,z)$.  Then $\phi(e^{-t},z)$ is a geodesic solving \eqref{geo1} if and only if
\be
\label{geos}
{\rm Im}\left( e^{-i\hat{\theta}}\left(\pi^{*}\omega + i(\pi^{*}\alpha +i D\ov{D} \phi)\right)^{n+1}\right)=0
\ee
\be
\label{geos5}
{\rm Re}\left( e^{-i\hat{\theta}}\left(\pi^{*}\omega + i(\pi^{*}\alpha+i D\ov{D} \phi)\right)^{n}\right)|_{X}> 0,
\ee
where $D$ denotes the differential in $A\times X$, and $\pi$ is the  projection onto the second factor.  The second constraint \eqref{geos5} expresses that $\phi \in \mathcal{H}$.
\end{lem}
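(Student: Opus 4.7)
The proof is a local calculation in holomorphic coordinates $(s,z_1,\ldots,z_n)$ on $M = A\times X$. Set $t := -\log|s|$ and $\Phi(t,z) := \phi(e^{-t},z)$. Radial symmetry together with the chain rule yields
\[
\phi_{s\bar s}=\frac{\Phi_{tt}}{4|s|^2},\qquad \phi_{s\bar z_j}=-\frac{\Phi_{t\bar z_j}}{2s},\qquad \phi_{z_j\bar z_k}=\Phi_{z_j\bar z_k}.
\]
Accordingly, decompose
\[
\Omega := \pi^*\omega + i(\pi^*\alpha + iD\bar D\phi) = E + F + G,
\]
where $E = \omega + i\alpha_\Phi$ is the purely $X$-part of $\Omega$, $F$ collects the mixed terms containing exactly one $ds$ or one $d\bar s$, and $G = -\phi_{s\bar s}\,ds\wedge d\bar s$ is the pure fiber term.

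Second, I would expand $\Omega^{n+1}$ using the multinomial theorem on $M$. Since every top-degree form on $M$ must contain exactly one $ds$ and one $d\bar s$, and since $G\wedge G=0$ while any triple wedge of $F$ repeats a $ds$ or a $d\bar s$, only the contributions indexed by $(k,l,m)=(n,0,1)$ and $(n-1,2,0)$ survive. A short wedge-product calculation gives
\[
F^2 = -\frac{1}{2|s|^2}\,ds\wedge d\bar s\wedge \partial\dot\Phi\wedge\bar\partial\dot\Phi,
\]
and so
\[
\Omega^{n+1} = -\frac{n+1}{4|s|^2}\,ds\wedge d\bar s\wedge\Bigl[\Phi_{tt}\,E^n + n\,\partial\dot\Phi\wedge\bar\partial\dot\Phi\wedge E^{n-1}\Bigr].
\]

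Third, to recover \eqref{geo1}, I would multiply by $e^{-i\hat\theta}$ and take the imaginary part. The subtlety is that $ds\wedge d\bar s = -2i\,du\wedge dv$ (with $s=u+iv$) is itself purely imaginary, so the imaginary part of $ds\wedge d\bar s\wedge\beta$ equals $-2\,du\wedge dv\wedge{\rm Re}(\beta)$ rather than $ds\wedge d\bar s\wedge{\rm Im}(\beta)$. Writing $\partial\dot\Phi\wedge\bar\partial\dot\Phi = -iR$ with $R := i\partial\dot\Phi\wedge\bar\partial\dot\Phi$ real, and using ${\rm Re}(-iW) = {\rm Im}(W)$, one computes
\[
{\rm Im}\bigl(e^{-i\hat\theta}\Omega^{n+1}\bigr) = \frac{n+1}{2|s|^2}\,du\wedge dv\wedge\Bigl[\Phi_{tt}\,{\rm Re}\bigl(e^{-i\hat\theta}E^n\bigr) + n\,R\wedge{\rm Im}\bigl(e^{-i\hat\theta}E^{n-1}\bigr)\Bigr],
\]
and since $nR = in\,\partial\dot\Phi\wedge\bar\partial\dot\Phi$, this bracket is exactly the left-hand side of \eqref{geo1}. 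Thus \eqref{geos} is pointwise equivalent to \eqref{geo1}.

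Finally, \eqref{geos5} is immediate from the decomposition: restricting $\Omega$ to a fiber $X_s = \{s\}\times X$ annihilates every term containing $ds$ or $d\bar s$, leaving $\Omega|_{X_s} = E = \omega + i\alpha_\Phi$, so the condition ${\rm Re}(e^{-i\hat\theta}(\omega+i\alpha_\Phi)^n)|_{X_s}>0$ is exactly $\Phi(t,\cdot)\in\mathcal H$. The whole argument is essentially bookkeeping in multilinear algebra; the only real obstacle is the sign discipline imposed by the pure-imaginary nature of $ds\wedge d\bar s$, which must be carefully respected when separating real and imaginary parts.
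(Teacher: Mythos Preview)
Your proposal is correct and follows essentially the same route as the paper's proof: both expand $(\pi^*\omega + i(\pi^*\alpha + iD\bar D\phi))^{n+1}$, isolate the two surviving terms (those carrying $E^n$ and $E^{n-1}$), and then extract the imaginary part by exploiting the reality of $i\,ds\wedge d\bar s$ and of $i\partial\dot\Phi\wedge\bar\partial\dot\Phi$. The only cosmetic difference is packaging: the paper works directly with the operators $\partial_s,\bar\partial_s,\partial,\bar\partial$ and notes that $i\partial_s\bar\partial_s\phi$ and $i\partial_s\bar\partial\phi\wedge i\partial\bar\partial_s\phi$ are real, whereas you set up the decomposition $\Omega=E+F+G$ and handle the imaginary part via $ds\wedge d\bar s=-2i\,du\wedge dv$; the resulting identities are the same up to the harmless overall factor coming from $t=-\log|s|$ versus $t=-\log|s|^2$ conventions.
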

\begin{proof}
This computation is contained in Lemma 2.8 in \cite{CY}, and we include the details here for convenience. For simplicity we exclude the pullback $\pi^*$ from our notation. We use $D$  to denote the differential in $X\times A$, $\partial_s$ for differential on $A$, and $\partial$ for the differential on $X$. Using  $t = -\log|s|$, we have
\be
\label{sderiv}
\pl_{s}\phi = \frac{-ds}{s}\dot{\phi}, \quad \bar\pl_{s}\phi = \frac{-d\bar s}{\bar{s}}\dot{\phi}, \quad \pl_s\bar\pl_{s}\phi = \frac{ds\wedge d\bar s}{|s|^2}\ddot{\phi}.
\ee
Now
\be
\begin{aligned}
e^{-i\hat{\theta}}\left(\omega + i(\alpha +i D\ov{D} \phi)\right)^{n+1} &= i (i\pl_s\bar\pl_{s}\phi) \wedge e^{-i\hat{\theta}}(\omega+i\alpha_{\phi})^n\\
&-n i\pl_s\bar \pl \phi\wedge i \pl \bar\pl_{s}\phi \wedge e^{-i\hat{\theta}}(\omega+i\alpha_{\phi})^{n-1}.\nonumber
\end{aligned}
\ee
Because $i\pl_s\bar\pl_{s} \phi$ is real, we can write
\be
{\rm Im}\left(\, i (i\pl_s\bar\pl_{s}\phi) \wedge e^{-i\hat{\theta}}(\omega+i\alpha_{\phi})^n\right) = (i\pl_s\bar\pl_{s}\phi)\wedge {\rm Re}\left( e^{-i\hat{\theta}}(\omega+i\alpha_{\phi})^n \right).\nonumber
\ee
Additionally, $i\pl_s\pl \phi\wedge i \pl \bar\pl_{s}\phi$ is real, which gives
\bea
&&{\rm Im}\left(n i\pl_s\bar \pl \phi\wedge i \pl \bar\pl_{s}\phi \wedge e^{-i\hat{\theta}}(\omega+i\alpha_{\phi})^{n-1}\right) \nonumber\\&&= n i\pl_s\bar \pl \phi\wedge i \pl\bar\pl_{s}\phi \wedge{\rm Im}\left( e^{-i\hat{\theta}}(\omega+i\alpha_{\phi})^{n-1}\right).\nonumber
\eea
Furthermore, the mixed terms can be expressed as
\be
i\pl_s\bar \pl \phi\wedge i \pl \bar\pl_{s}\phi = -\frac{1}{|s|^2} i\pl \dot{\phi}\wedge \ov{\pl}\dot{\phi} \wedge ids\wedge d\ov{s}.\nonumber
\ee
Putting everything together gives
\be
{\rm Im}\left(e^{-i\hat{\theta}}\left(\pi^{*}\omega + i(\pi^{*}\alpha_0 +i D\ov{D} \phi)\right)^{n+1} \right)= \nonumber
\ee
\be
\left[\ddot{\phi} {\rm Re}\left(e^{-i\hat{\theta}}(\omega+i\alpha_{\phi})^n\right)+n i\pl \dot{\phi}\wedge \ov{\pl}\dot{\phi} \wedge{\rm Im}\left( e^{-i\hat{\theta}}(\omega+i\alpha_{\phi})^{n-1}\right)\right]\wedge \frac{ ids\wedge d\ov{s}}{|s|^2}.\nonumber
\ee
This completes the proof of the lemma.
\end{proof}

To conclude this section, note that if we assume $\alpha_0>0$ and take the ``small radius limit", that is, we consider $t \omega$ and take $t\rightarrow 0$, then by direct computation we have $ e^{i\hat{\theta}} \rightarrow (i)^n.$ This implies
\be
{\rm Im}\left( e^{-i\hat{\theta}}\left(\pi^{*}\omega + i(\pi^{*}\alpha_0 +i D\ov{D} \phi)\right)^{n+1}\right) \rightarrow \left(\pi^{*}\alpha_0 + iD\ov{D} \phi\right)^{n+1},\nonumber
\ee
which is the usual geodesic equation of Donaldson-Mabuchi-Semmes \cite{D,M,S}.

\section{Subequations and subharmonic functions}
\label{sub}

Here we review the Dirichlet-Duality theory of Harvey Lawson, which is our main tool for studying \eqref{geos}. In particular we introduce several relevant definitions, and detail the appropriate notion of a weak  solution to the Dirichlet problem for our setup. The majority of this content can be found in \cite{HL1, HL2}, yet we include the details here for the reader's convenience.

Let $M$ be a compact Riemannian manifold of dimension $m$. Denote by $J^2(M)$ the bundle of 2-jets over $M$. At a point $p\in M$ the fiber of this bundle is given by
 \be
 J^2_p(M)=C^\infty_p/C^\infty_{p,3}.\nonumber
 \ee
 Here $C^\infty_p$ denotes the germs of smooth functions, and $C^\infty_{p,3}$ is the subspace of germs that vanish to order 3. There is a short exact sequence of bundles
 \be
 \label{sequence}
 0\rightarrow {\rm Sym}^2(T^*M)\rightarrow J^2(M)\rightarrow J^1(M)\rightarrow0,
 \ee
 where $ {\rm Sym}^2(T^*_pM)$ is the space of symmetric bilinear forms on $T_pM$. As mentioned in \cite{HL2}, the above sequence does not split naturally, however, on a Riemannian manifold the Hessian of a function can be defined using the Riemannian metric, which in turn leads to a splitting.
 
 Let $F\subset J^2(M)$ be an arbitrary subset of the 2-jet bundle.  A function $u\in C^2(M)$ is called {\it $F$-subharmonic} if its 2-jet satisfies
 \be
 J^2_pu\in F_p\nonumber
 \ee
for all $p\in M$, and {\it strictly $F$-subharmonic} if on each fiber its 2-jet lies in  int$(F_p)$. Given this  setup, Harvey-Lawson introduce a special class of subsets $F\subset J^2(M)$, called {\it subequations}, for which $F$-subharmonic functions behave in many ways similar to classical subharmonic functions on Euclidean space. Furthermore, using this class, they develop a  theory building towards  a general solution of the Dirichlet problem.

For the purposes of this paper, we do not give the full definition of a subequation, and instead restrict to a special case (for the general case we direct the   reader to  \cite{HL1, HL2}). Specifically, we say that $F$ is {\it Dirichlet set} if it satisfies the  condition:
\be
\label{(P)}
F+\mathcal P\subset F,\nonumber
\ee
where $\mathcal P$ is the set of non-negative symmetric matrices.
Next, we say $F$ is of {\it purely second order} if, with respect to the splitting of \eqref{sequence} given by the Riemannian metric, one can write
\be
F=\mathbb R\oplus T^*M\oplus F'\nonumber
\ee
for some $F'\subset {\rm Sym}^2(T^*_pM)$. In other words, if $u\in C^2(M)$, then $J^2_p u\in F$ if and only if ${\rm Hess}_p u\in F'$. Given a purely second order set $F$, Harvey-Lawson show $F$ is a subequation if and only if  it is a Dirichlet set, and for any fiber over $p$ it holds ${\rm int}(F_p')=({\rm int}F')_p$. For the purposes of this paper we will use this condition to define our subequations.

We now extend the definition of  $F$-subharmonic functions from $C^2(M)$ to the space of all upper semi-continuous functions  ${\rm USC}(M):M\rightarrow[-\infty,\infty)$. 

\begin{defn}
A function $u\in{\rm USC}(M)$ is called $F$-subharmonic if for each point $p\in M$, and each $C^2$ function $\phi$ near $p$ with $u(p)=\phi(p)$, one has
\be
u-\phi\leq 0\,\implies J^2_p\phi\in F_p.\nonumber
\ee
We denote this space by $F(M)$.
\end{defn}
Note that if $u\in C^2 (M)$, then we can choose $u$ as our test function to recover the definition of a $F$-subharmonic function given above. In fact, if  $F$ is a subequation, one can check
\be
u\in F(M)\cap C^2(M)\iff J^2_pu\in F_p\,\,\,{\rm for\,\,all}\,\,p\in M.\nonumber
\ee
Next we define strictly $F$-subharmonic functions  in USC$(M)$. For $\delta>0$ define the set $F^\delta\subset F$ by
\be
\label{strictsub}
F^\delta_p:=\{A\in F_p\,|\,{\rm dist}(A,\sim F_p)\geq \delta\}.
\ee
Here $\sim F_p$ denotes the complement of   $F_p$ in a given fiber. 
\begin{defn}
A function $u\in{\rm USC}(Y)$ is said to be strictly  $F$-subharmonic if for each point $p\in Y$, there is a neighborhood $U$ of $p$ and a constant $\delta>0$ so that $u$ is $F^\delta$ subharmonic on $U$.
\end{defn}

Given a subset $F\subset J^2(M)$, the {\it Dirichlet dual} of $F$ is defined as
\be
\ti F:=\sim(-{\rm int}(F))=-(\sim{\rm int}(F)).\nonumber
\ee
Proposition 3.10 in \cite{HL2} shows that $F$ is a subequation if and only if $\ti F$ is a subequation. The notion of a Dirchlet dual allows us to define $F$-subhamonic functions, similar to the observation that in the classical theory a function is harmonic if and only if it is subharmonic and superharmonic. 
\begin{defn}
\label{harmonicdef}
A function $u\in{\rm USC}(M)$ is said to be $F$-harmonic if
\be
u\in F(M) \qquad{\rm and}\qquad-u\in\ti F(M).\nonumber
\ee
\end{defn}
Note that if $u$ is $F$-harmonic and in $C^2(M)$, then  $J^2_pu\in\partial F_p$ for all $p\in M$. Finally, we introduce the appropriate notion of a a weak solution to the Dirichlet problem. Suppose $M$ is a manifold with boundary. 

\begin{defn}
Fix $\phi\in C^0(\partial M)$. We say $u\in C^0(\overline M)$ solves the Dirichlet problem for $F$ if it satisfies
\be
(1) \,u\,\,{\rm is}\,\,F{\text-harmonic}{\rm\,\, on\,\, int}(M)\qquad{\rm and}\qquad (2)\,u|_{\partial M}=\phi.\nonumber
\ee 
\end{defn}
Using this definition we solve the Dirichlet problem for  \eqref{geos}. Our next step is to reformulate this equation into the language of subequations.

\section{Subequations for two types of Lagrangian angles} 
\label{angles}

Let $M:=A\times X$, where $A:=\{s\in\mathbb C\,|\, 1\leq |s|\leq 2\}$, and $X$ is our given compact K\"ahler manifold.  The main goal of this section is to define a subequation $\mathcal F_{\sigma,c}\subset J^2(M)$ for which $\mathcal F_{\sigma,c}$-harmonic functions satisfy \eqref{geos}.  As a first step, we define a subequation for the standard Lagrangian angle $\Theta(\alpha_\phi)$ on $X$. 

To begin, we define a subequation on the space of symmetric matrices  ${\rm Sym}^{2}(\mathbb R^{2n})$ in Euclidean space, and then extend this definition to the 2-jet bundle of $M$. Let $\omega$ be the standard K\"ahler form on $\mathbb C^n=\mathbb R^{2n}$, and $J$ the standard complex structure. Since Dirichlet-Duality theory is formulated in terms of symmetric matrices,   in our complex setting we need to first identify ${\rm Herm}(\mathbb C^n)$ with the subset of  ${\rm Sym}^{2}(\mathbb R^{2n})$ given by $J$-invariant matrices. 

Any Hermitian matrix $H$ can be written as $H=A_1+i A_2,$ where $A_1$ is a real symmetric matrix and $A_2$ is a real skew-symmetric matrix. Specifically, set $A_1=\frac12(H+\bar H)$ and $A_2=\frac1{2i}(H-\bar H).$ We now define the inclusion:
\be
\iota(H):=  \begin{pmatrix} A_1 & A_2  \\ -A_2 & A_1 \end{pmatrix}\in {\rm Sym}^2(\mathbb R^{2n}).\nonumber
\ee
Moreover, if $N\in  {\rm Sym}^2(\mathbb R^{2n})$, then the projection onto the $J$-invariant part is given by
\be
p(N):=\frac12(N+J^TNJ).\nonumber
\ee
For $A\in{\rm Sym}^{2}(\mathbb R^{2n})$, the angle of a symmetric matrix is given by
\be
\label{thetadef}
\ti\Theta (A):=\frac12{\rm tr\,\,arg}(Id_{\mathbb R^{2n}}+i \,\,p(A)).
\ee
Note that this function is valued in $(-n\frac\pi2,n\frac\pi2)$. The factor of $1/2$  is included to account for the fact that matrices in the image of $p(\cdot)$ have eigenvalues of multiplicity $2$, with corresponding eigenvectors $v_i$ and $J(v_i)$. 

For any $c\in\mathbb R$, we set
\bea
F_c&:=&\{A\in {\rm Sym}^{2}(\mathbb R^{2n})\,|\, \ti\Theta(A)\geq c\}.\nonumber
\eea
The following Lemma is proven in \cite{CNS,HL1,HL2}. 
\begin{lem}
\label{localangle}
When $|c|<\frac{n\pi}2$, the set $F_c$ defines a local subequation on $\mathbb R^{2n}$. Furthermore, its dual is given by
\be
\ti F_c=F_{-c}.\nonumber
\ee
\end{lem}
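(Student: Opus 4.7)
The strategy is to verify the two conditions that characterize a purely second-order subequation: the positivity (Dirichlet) property $F_c + \mathcal{P} \subset F_c$, and the topological identity $\mathrm{int}(F_c) = \{A : \ti\Theta(A) > c\}$. Both rest on two elementary properties of $\ti\Theta$, namely continuity on all of $\mathrm{Sym}^2(\mathbb{R}^{2n})$ and monotonicity along $\mathcal{P}$. The duality $\ti F_c = F_{-c}$ will then drop out of the antisymmetry $\ti\Theta(-A) = -\ti\Theta(A)$.

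\emph{Step 1 (Monotonicity).} First I would observe that $p(\cdot)$ is a linear projection onto the $J$-invariant part of $\mathrm{Sym}^2(\mathbb{R}^{2n})$, and that $p(P) = \tfrac{1}{2}(P + J^T P J) \geq 0$ whenever $P \geq 0$ since $J$ is orthogonal. Passing through the identification $\iota$, the matrix $p(A)$ has real eigenvalues $\lambda_1,\dots,\lambda_n$, each occurring with multiplicity two (this is exactly what the factor $\tfrac12$ in the definition of $\ti\Theta$ compensates for), so that $\ti\Theta(A) = \sum_{k=1}^n \arctan(\lambda_k)$. By Weyl's eigenvalue monotonicity, adding $p(P) \geq 0$ weakly increases each $\lambda_k$; since $\arctan$ is increasing, $\ti\Theta(A+P) \geq \ti\Theta(A)$, giving $F_c + \mathcal{P} \subset F_c$.

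\emph{Step 2 (Interior and non-triviality).} The hypothesis $|c| < n\pi/2$ is used here: testing $A = tI$ for $t \in \mathbb{R}$ gives $\ti\Theta(tI) = n\arctan(t) \to \pm n\pi/2$ as $t \to \pm\infty$, so both $F_c$ and its complement have non-empty interior. Combining with the continuity of $\ti\Theta$, one has $\mathrm{int}(F_c) = \{\ti\Theta > c\}$ and $\partial F_c = \{\ti\Theta = c\}$. This gives the topological condition $\mathrm{int}(\overline{F_c}) = \mathrm{int}(F_c)$ (and the analogous equality on each fiber when extended to the 2-jet bundle of $M$), completing the verification that $F_c$ is a local subequation.

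\emph{Step 3 (Duality).} Unwinding the definition,
$\ti F_c = -(\sim\mathrm{int}(F_c)) = \{-A : \ti\Theta(A) \leq c\} = \{B : \ti\Theta(-B) \leq c\}.$
Since $p$ is linear, $p(-B) = -p(B)$, so its eigenvalues are $-\lambda_k$; using $\arctan(-x) = -\arctan(x)$ one finds $\ti\Theta(-B) = -\ti\Theta(B)$. Substituting, $\ti F_c = \{B : \ti\Theta(B) \geq -c\} = F_{-c}$.

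The only point requiring any real care is in Step 1: one must confirm that the spectrum of $p(A)$, when $p(A)$ is viewed as a real symmetric $J$-invariant matrix via $\iota$, consists of $n$ distinct Hermitian eigenvalues each with multiplicity two, so that $\ti\Theta$ really is a sum of $n$ (not $2n$) arctangents. Everything else is direct verification from the definitions.
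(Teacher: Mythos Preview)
Your argument is correct. The paper itself does not prove this lemma directly but simply cites \cite{CNS,HL1,HL2}; your three-step verification (monotonicity of $\ti\Theta$ along $\mathcal P$ via Weyl's inequality and the fact that $p$ preserves nonnegativity, the topological condition using continuity of $\ti\Theta$ together with strict monotonicity in the direction of the identity, and duality from the oddness $\ti\Theta(-A)=-\ti\Theta(A)$) is exactly the standard argument those references contain, so you have supplied the details the paper omits. Two minor wording quibbles: in your final remark the eigenvalues need not be \emph{distinct}, only that the $2n$ real eigenvalues of $p(A)$ come in $n$ pairs corresponding to $v_i,\,Jv_i$; and in Step~2 the precise topological condition required is $F_c=\overline{\mathrm{int}(F_c)}$, which your argument (perturbing $A$ by $\pm\epsilon I$) indeed establishes.
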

Given the above subequation on ${\rm Sym}^{2}(\mathbb R^{2n})$, there is a natural extension to $J^2(X)$, using a   bundle automorphism. Again let $\omega$ be the K\"ahler form on $X$. We follow the construction from Section 6 of \cite{HL2}.  
 
Recall the Hermitian endomorphism $\Lambda=\omega^{-1}\alpha$  from Section \ref{background}. Since $\omega$ determines a metric on $X$, the $2$-jet bundle splits as
\be
 J^2(X)={\mathbb R}\oplus T^*X\oplus {\rm Sym}^2(T^*X).\nonumber
 \ee
 Using this splitting, consider the bundle automorphism
 \be
 \sigma(r,\ell,A)=(r,\ell, A+\iota(\Lambda)(x)),\nonumber
 \ee
and set $F_{\sigma,c}:=\sigma^{-1}(F_c)$. Then we have 
 \be
 A\in F_{\sigma,c}\,\iff\, \ti\Theta(\iota(\Lambda)+A)\geq c.\nonumber
 \ee
 It then follows that $ F_{\sigma,c}$-harmonic functions are functions $u$ solving
 \be
  \ti\Theta(\iota(\Lambda)+{\rm Hess}\,u)= c,\nonumber
 \ee
 which we can  write as
 \be
\frac12\, {\rm tr\,\,arg}\left(Id_{T^*X}+i \left(\iota(\Lambda)+p({\rm Hess}\,u)\right)\right)=c.\nonumber
 \ee
 Note we used that $p(\iota(\Lambda))=\iota(\Lambda)$.  The above equation holds pointwise on $X$, and working in normal coordinates one can see it is   equivalent  to \eqref{DHYM2}.

To define the dual subequation $\ti F_{\sigma,c}$, consider the bundle automorphism
 \be
 \ti \sigma(r,\ell,A)=(r,\ell, A-\iota(\Lambda)(x)).\nonumber
 \ee
By Lemma \ref{localangle}, the dual subequation to $F_c$ is $F_{-c}$, and so we define  the subequation $F_{\ti\sigma,-c}:=\ti\sigma^{-1}( F_{-c})$. It then follows that
 \be
 A\in F_{\ti \sigma,-c}\,\iff\, \ti\Theta(-\iota(\Lambda)+A)\geq -c.\nonumber
  \ee
One can now check, using the argument from Lemma 6.14 in \cite{HL2}, that $F_{\ti \sigma,-c}$ is indeed the dual subequation to  $F_{\sigma,c}.$ Summing up, we have proved the following:

\begin{lem}
\label{localangle2}
When $|c|<\frac{n\pi}2$, the set $F_{\sigma,c}\subset J^2(X)$ defines a global subequation, with its dual is given by $F_{\ti \sigma,-c}$. Furthermore, $ F_{\sigma,c}$-harmonic functions are weak solutions to the deformed Hermitian-Yang-Mills equation \eqref{DHYM2}.
\end{lem}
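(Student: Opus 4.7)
The plan is to verify the three assertions sequentially; each is essentially an unpacking of the constructions developed in the preceding discussion.

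For the global subequation claim, I would exploit that $\Lambda = \omega^{-1}\alpha$ is a globally defined Hermitian endomorphism of $T^{1,0}X$, so $\iota(\Lambda)$ is a smooth global section of $\mathrm{Sym}^2(T^*X)$. Hence $\sigma$ is a well-defined bundle automorphism of $J^2(X)$ acting by translation only on the second-order factor. Since $F_c \subset \mathrm{Sym}^2(\mathbb{R}^{2n})$ is a local subequation for $|c|<n\pi/2$ by Lemma \ref{localangle}, and fiberwise affine translation preserves $\mathcal{P}$-invariance, closedness, and the interior-matching condition, the pullback $F_{\sigma,c} = \sigma^{-1}(F_c)$ inherits the subequation structure globally, with the constraint on $c$ carried over directly.

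For the dual identification, I would proceed by a direct chase. Unwinding the definitions gives $-\mathrm{int}(F_{\sigma,c}) = \{A : -A+\iota(\Lambda) \in \mathrm{int}(F_c)\}$, whose fiberwise complement is $\{A : -A+\iota(\Lambda) \in \sim\!\mathrm{int}(F_c)\}$. Using that complementation commutes with negation together with Lemma \ref{localangle}, one obtains $\sim\!\mathrm{int}(F_c) = -\ti F_c = -F_{-c}$, and the dual is then $\ti F_{\sigma,c} = \{A : A-\iota(\Lambda) \in F_{-c}\} = \ti\sigma^{-1}(F_{-c}) = F_{\ti\sigma,-c}$.

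For the dHYM characterization, I would first handle $u \in C^2(X)$. By Definition \ref{harmonicdef}, $u$ is $F_{\sigma,c}$-harmonic iff $J^2_x u \in \partial F_{\sigma,c}$ for every $x \in X$, which reduces to the pointwise equation $\ti\Theta(\iota(\Lambda) + \mathrm{Hess}\,u) = c$. In K\"ahler normal coordinates at $x$ in which $\omega$ is standard and $\alpha_u = \alpha + i\partial\bar\partial u$ is simultaneously diagonal with eigenvalues $\mu_1,\dots,\mu_n$, the $J$-invariant projection $p(\mathrm{Hess}\,u)$ is identified with $i\partial\bar\partial u$, so that $\iota(\Lambda) + p(\mathrm{Hess}\,u)$ corresponds to the Hermitian matrix $\omega^{-1}\alpha_u$. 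The factor $1/2$ in \eqref{thetadef} absorbs the doubling of eigenvalues arising from the real embedding, giving $\ti\Theta = \sum_k \arctan(\mu_k) = \Theta(\alpha_u)$. Thus the $C^2$ equation is exactly \eqref{DHYM2}, and the upper-semicontinuous framework of $F$-subharmonicity then supplies the appropriate notion of weak solution.

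The main subtlety, rather than a genuine obstacle, is the real-versus-complex bookkeeping: ensuring that the projection $p$ correctly extracts the $(1,1)$-part of the Hessian, and that the factor $1/2$ in $\ti\Theta$ precisely compensates for the doubled eigenvalue structure of $J$-invariant symmetric matrices. Once this dictionary is in place, each of the three assertions follows by routine definition-chasing.
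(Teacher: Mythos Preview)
Your proposal is correct and follows essentially the same route as the paper: the lemma is stated as a summary of the preceding discussion, which constructs $F_{\sigma,c}$ via the jet-equivalence $\sigma$, identifies the dual by unwinding definitions (the paper defers this to Lemma~6.14 of \cite{HL2}, whereas you spell out the chase), and verifies the dHYM equation in normal coordinates. The only substantive content you add is making the $\mathcal P$-invariance and interior-matching preservation under affine translation explicit, which the paper leaves implicit in its appeal to the Harvey--Lawson jet-equivalence framework.
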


Next we turn to the definition of the space-time Lagrangian angle, following \cite{RS}. As above, we first consider the local case, and then use jet equivalence to define a subequation on $J^2(M)$. Working on  $\mathbb C^{n+1}=\mathbb R^{2n+2}$, let $I_{2n}$ be the diagonal matrix with 2n+2 entries given by  ${\rm diag}(0,0,1,...,1)$. For a given $A\in{\rm Sym}^2(\mathbb R^{2n+2})$, if det$(I_{2n}+i p(A))\neq0$, then we define the space-time angle of $A$ by
\be
\Phi(A):={\rm arg\,det}(I_{2n}+i p(A))^{\frac12}\in S^1.\nonumber
\ee
Similar to the case of the standard Lagrangian angle, if we let $\{\mu_i\}$ denote the eigenvalues of  $B=I_{2n}+i \, p(A)$, then (by a slight abuse of notation), the angle lifts to $\mathbb R$ viaDi
\be
\label{spaceangle2}
\Phi(A)=\sum_{i=1}^{2n+2}{\rm arg}(\mu_i).
\ee

We need to extend this definition to the case where the determinant vanishes. Write the matrix $p(A)$ as $(a_{ij})_{i,j=1}^{n+2}$, and define the truncated matrix
\be
p(A)^+=(a_{ij})_{i,j=3}^{n+2}.\nonumber
\ee
Let $\vec a_1=(a_{13},a_{14},...,a_{1(2n+2)})$, so the first row of $p(A)$ is given by $(a_{11},0,\vec a_1)$ (the second entry of this row must be zero by $J$ invariance). Define the set
\be
\mathcal S=\{A\in {\rm Sym}^2(\mathbb R^{2n+2})|\,p(A)={\rm diag}(0,p(A)^+)\}.\nonumber
\ee
Away from this set, $\Phi$   is well defined. To see this, denote $B_0:=I_{2n}+i p(A)$, which corresponds to the case of $\eta=0$ from Lemma \ref{matrixfun} below. Equation \eqref{detB} now gives that det$(B_0)$ is non-zero as long as both $a_{11}$ and $\vec a_1$ do not vanish, and so $\Phi$  is well defined away from $\mathcal S$.

 We now use  the following upper semi-continuous extension to define the angle away from $\mathcal S$:
\be
\ti\Phi(A) = \begin{cases} \qquad\qquad\,\,\,\Phi(A) &\mbox{if } A\notin \mathcal S \\
\frac\pi2+\frac12{\rm tr\,\,arg}(I_{\mathbb R^{2n}}+i p(A)^+)& \mbox{if } A\in \mathcal S. \end{cases}\nonumber
\ee
Similarly the lower semi-continuous extension is defined as:
\be
\underline{\ti\Phi}(A) = \begin{cases} \qquad\qquad\,\,\,\Phi(A) &\mbox{if } A\notin \mathcal S \\
-\frac\pi2+\frac12{\rm tr\,\,arg}(I_{\mathbb R^{2n}}+i p(A)^+)& \mbox{if } A\in \mathcal S. \end{cases}\nonumber
\ee

Before we use the lifted space-time angle $\ti\Phi(\cdot)$ to define a subequation, we need a few preliminary results. These results are extensions of the work in \cite{RS} to the Hermitian case. Therefore, we only presented differences that need to be considered, and simply restate those results which carry over to our setting with no modification. 

\begin{lem}
If $A\in{\rm Sym}^2(\mathbb R^{2n+2})$ and $\delta>0$, then ${\rm Re}((\delta Id_{\mathbb R^{2n}}+i A)^{-1})$ is positive definite.
\end{lem}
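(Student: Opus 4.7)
The plan is to diagonalize $A$ using the spectral theorem and then compute the real part of the inverse entry-by-entry in the diagonal basis. Since $A\in{\rm Sym}^2(\mathbb R^{N})$ is real symmetric (where $N=2n+2$ and we read the identity in the statement as $Id_{\mathbb R^N}$), there exists an orthogonal $O$ with $A = O D O^T$ for $D = \mathrm{diag}(\lambda_1,\dots,\lambda_N)$ real. Then
\[
(\delta Id + iA)^{-1} = O\,(\delta Id + iD)^{-1}\,O^T = O\,\mathrm{diag}\!\left(\tfrac{1}{\delta + i\lambda_j}\right)\!O^T.
\]

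Next I would rationalize each diagonal entry, writing $\tfrac{1}{\delta + i\lambda_j} = \tfrac{\delta - i\lambda_j}{\delta^2 + \lambda_j^2}$, so that taking real parts gives
\[
\mathrm{Re}\!\left((\delta Id + iA)^{-1}\right) = O\,\mathrm{diag}\!\left(\tfrac{\delta}{\delta^2 + \lambda_j^2}\right)\!O^T.
\]
Since $\delta>0$ each diagonal entry $\tfrac{\delta}{\delta^2+\lambda_j^2}$ is strictly positive, and conjugation by an orthogonal matrix preserves positive definiteness. Hence the result is positive definite.

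Alternatively, and perhaps more cleanly for the write-up, I would avoid coordinates by observing that $\delta Id + iA$ commutes with its complex conjugate $\delta Id - iA$ (both are polynomials in $A$), so
\[
(\delta Id + iA)^{-1} = (\delta Id - iA)\bigl(\delta^2 Id + A^2\bigr)^{-1},
\]
and taking real parts gives $\mathrm{Re}\!\left((\delta Id + iA)^{-1}\right) = \delta\bigl(\delta^2 Id + A^2\bigr)^{-1}$. Since $A^2\succeq 0$, the matrix $\delta^2 Id + A^2$ is positive definite, hence so is its inverse, and multiplying by $\delta>0$ preserves this.

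There is no real obstacle here — this is a routine linear-algebra identity; the only subtlety is making sure one interprets $\mathrm{Re}$ entry-wise on the complex matrix $(\delta Id+iA)^{-1}$, which is unambiguous because $\delta Id+iA$ is normal and its inverse has a well-defined real/imaginary decomposition coming directly from the spectral calculation.
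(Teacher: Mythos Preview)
Your argument is correct. The paper does not actually supply a proof of this lemma; it is listed among the ``preliminary results'' that ``carry over to our setting with no modification'' from \cite{RS}, and is simply restated. Your spectral-theorem computation (and the coordinate-free variant via $(\delta Id+iA)^{-1}=(\delta Id-iA)(\delta^2 Id+A^2)^{-1}$) is exactly the standard verification one expects, and you have correctly read $Id_{\mathbb R^{2n}}$ as the identity on the ambient $\mathbb R^{2n+2}$.
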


\begin{lem}
\label{matrixfun}
Let $A\in {\rm Sym}^2(\mathbb R^{2n+2})$, and consider the matrix $I^\eta_{2n}$ given by the diagonal $(\eta,\eta,1,1,...,1)$.  The eigenvalues $\mu_i$ of $B_\eta=I^{\eta}_{2n}+i p(A)$ satisfy Re$(\mu_i)\geq 0$. Furthermore, if $\vec a_1\neq 0$ or $\eta>0$, then Re$(\mu_i)>0$.
\end{lem}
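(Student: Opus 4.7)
The plan is to extract both conclusions from a single Hermitian ``energy'' identity. If $v \in \mathbb{C}^{2n+2}$ is a nonzero eigenvector of $B_\eta$ with eigenvalue $\mu$, then pairing with $v^*$ on the left gives $\mu|v|^2 = v^* B_\eta v$. Because $p(A)$ is a real symmetric matrix, $i\,p(A)$ is skew-Hermitian, so $v^* (i\,p(A))\,v$ is purely imaginary, while $v^* I^\eta_{2n}\, v$ is real and nonnegative. Separating real parts then yields
\[
\Re(\mu)\,|v|^2 \;=\; v^* I^\eta_{2n}\, v \;\geq\; 0,
\]
which immediately gives $\Re(\mu_i)\geq 0$ in all cases, and the strict inequality $\Re(\mu_i)>0$ when $\eta>0$, since $I^\eta_{2n}$ is then positive definite.

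It remains to handle $\eta = 0$ with $\vec a_1 \neq 0$, which I would argue by contradiction. Assuming $\Re(\mu)=0$, the identity above forces $v^* I_{2n}\,v=0$, and since $I_{2n}=\mathrm{diag}(0,0,1,\ldots,1)$, this means $v_k=0$ for all $k\geq 3$, so $v=(v_1,v_2,0,\ldots,0)$. Reading the eigenvalue equation $i\,p(A)\,v=\mu v$ in each coordinate $k\geq 3$ and using $v_\ell=0$ for $\ell\geq 3$ collapses it to the linear conditions $a_{k1}v_1+a_{k2}v_2=0$ for all $k\geq 3$.

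The last step uses the $J$-invariance of $p(A)$ to recognize this as an invertible $2\times 2$ system in $(v_1,v_2)$. Organizing coordinates into $(n+1)$ blocks of size two, $J$-invariance together with symmetry of $p(A)$ forces the off-diagonal block $P_{1j}$ ($j\geq 2$) to take the form $\bigl(\begin{smallmatrix} a_{1(2j-1)} & a_{1(2j)}\\ -a_{1(2j)} & a_{1(2j-1)}\end{smallmatrix}\bigr)$. The pair of equations coming from $k=2j-1$ and $k=2j$ can then be packaged as
\[
\begin{pmatrix} a_{1(2j-1)} & -a_{1(2j)}\\ a_{1(2j)} & a_{1(2j-1)}\end{pmatrix}\begin{pmatrix} v_1\\ v_2\end{pmatrix} \;=\; 0,
\]
whose determinant equals $a_{1(2j-1)}^2+a_{1(2j)}^2$. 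Since $\vec a_1\neq 0$, some entry $a_{1m}$ with $m\geq 3$ is nonzero, and it lies in some block $j\geq 2$ for which the above determinant is strictly positive; this forces $(v_1,v_2)=0$, contradicting $v\neq 0$.

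The main obstacle I foresee is purely bookkeeping, namely tracking the $J$-invariance structure of the symmetric matrix $p(A)$ carefully enough to extract the $2\times 2$ block system above. Once that structure is unpacked, the rest is an elementary linear algebra manipulation of the eigenvalue equation.
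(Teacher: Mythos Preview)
Your argument is correct and proceeds differently from the paper. The paper passes to the associated $(n+1)\times(n+1)$ complex matrix $B^{\mathbb C}_\eta = I^\eta_n + iA^{\mathbb C}$ and expands $\det B^{\mathbb C}_\eta$ by a Schur-complement formula (equation~\eqref{detB}), showing the scalar factor $\eta + i{\bf a}_{11} + \vec{\bf a}_1(B^{\mathbb C}_+)^{-1}\vec{\bf a}_1^{\,*}$ has nonnegative real part, strictly positive when $\eta>0$ or $\vec{\bf a}_1\neq 0$. You instead stay on $\mathbb R^{2n+2}$ and use the numerical-range identity $\Re(\mu)\,|v|^2 = v^* I^\eta_{2n}\,v$, which gives the eigenvalue-by-eigenvalue bound immediately and then handles the borderline case $\eta=0$ by reading off the $2\times 2$ block structure that $J$-invariance forces on $p(A)$. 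Your route is more elementary and in fact establishes the statement about \emph{individual} eigenvalues more directly than the determinant computation does; the paper's Schur-complement identity, on the other hand, is reused later (in the proof of Theorem~\ref{continousgeo}) to compare the space-time angle $\ti\Phi$ with the slice angle $\Theta$, so that computation has independent value. One small bookkeeping point: your block decomposition presumes $J$ is block-diagonal in $2\times 2$ blocks, which is the convention consistent with how $I^\eta_{2n}$, $\mathcal S$, and $\vec a_1$ are set up in the surrounding text, even though the paper's own proof switches midway to the convention $J=\bigl(\begin{smallmatrix}0&-I\\ I&0\end{smallmatrix}\bigr)$; this is harmless but worth noting if you are cross-referencing.
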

\begin{proof}
First we identify $B_\eta$  with a matrix in Herm$(\mathbb C^{n+1})$. Let $A^{\mathbb C}$ denote the  Hermitian matrix satisfying $\iota (A^{\mathbb C})=p(A)$. Specifically, choose coordinates on $\mathbb R^{2n+2}$ so 

\be
J:=  \begin{pmatrix} 0 & -Id_{\mathbb R^{n+1}}  \\ Id_{\mathbb R^{n+1}} & 0 \end{pmatrix}\qquad{\rm and}\qquad p(A)=\begin{pmatrix} A_1 & A_2  \\ -A_2 & A_1 \end{pmatrix}.\nonumber
\ee
Then $A^{\mathbb C}=A_1+i A_2$. We define
\be
B^{\mathbb C}_\eta=I^\eta_{n}+i A^{\mathbb C},\nonumber
\ee
and note that $\iota( B^{\mathbb C}_\eta)=B_\eta$. Let $B^{\mathbb C}_+$ be the $n\times n$ Hermitian matrix given by removing the first row and column  from $B^{\mathbb C}$, and define $A^{\mathbb C}_+$ in the same fashion. $B^{\mathbb C}_+$ is invertible, since if $\lambda_i$ are the real eigenvalues of $A^{\mathbb C}_+$, then the eigenvalues for $B^{\mathbb C}_+$ are $1+i \lambda_i$, which are always non-vanishing. Furthermore, the real part of the eigenvalues of $(B^{\mathbb C}_+)^{-1}$ are given by $1/(1+\lambda_i^2)$, and are thus all positive. 

 Let $({\bf a}_{11},\vec {\bf a}_1)$ is the first column of $A^{\mathbb C}$, from which we see  $(\eta+i{\bf a}_{11},i \vec {\bf a}_1)$ is the first column of $B^{\mathbb C}_\eta$. We can write the determinant of $B^{\mathbb C}_\eta$ as 
\be
\label{detB}
{\rm det} B^{\mathbb C}_\eta={\rm det} B^{\mathbb C}_+\left(\eta+i{\bf a}_{11}+\vec{\bf a}_1 (B_+^{\mathbb C})^{-1} {\vec{ \bf a}}_1^*\right),
\ee
and so  the determinant  vanishes only if $\left(\eta+i {\bf a}_{11}+\vec{\bf a}_1 (B_+^{\mathbb C})^{-1} {\vec{ \bf a}}_1^*\right)$ vanishes. The eigenvalues of $B^{\mathbb C}_+$ all have strictly positive real part, so we concentrate on 
\be
{\rm Re}\left(\eta +i {\bf a}_{11}+\vec{\bf a}_1 (B_+^{\mathbb C})^{-1} {\vec{ \bf a}}_1^*\right)=\eta+{\rm Re}\left(\vec{\bf a}_1 (B_+^{\mathbb C})^{-1} {\vec{ \bf a}}_1^*\right).\nonumber
\ee
Note that $\vec {\bf a}_1$ has both a real and imaginary part. However, as before if we choose coordinates so  $B^{\mathbb C}_+$  is diagonal with eigenvalues $1+i \lambda_i$, then  
\be
{\rm Re}\left(\vec{\bf a}_1 (B_+^{\mathbb C})^{-1} {\vec{ \bf a}}_1^*\right)={\rm Re}\left(\sum_{i=2}^{n+1}\frac{|{\bf a}_{1i}|^2}{1+i \lambda_i}\right)=\sum_{i=2}^{n+1}\frac{|{\bf a}_{1i}|^2}{1+\lambda_i^2}.\nonumber
\ee
Thus, it follows that
\be
{\rm Re}\left(\eta +\vec{\bf a}_1 (B_+^{\mathbb C})^{-1} {\vec{ \bf a}}_1^*\right)\geq0,\nonumber
\ee
with strict inequality if either $\eta$ or $\vec{\bf a}_1$ are non-zero. This proves the conclusion for $B^{\mathbb C}_\eta$, and thus for $B_\eta$ by the relationship $B_\eta=\iota (B^{\mathbb C}_\eta)$.
\end{proof}

The above lemma allows us to define the argument of $B=I_{2n}+i p(A)$, using Corollary 3.5 from \cite{RS}. This, in turn, allows us to expressed the lifted angle as
\be
\label{liftedspacetimeanlge}
\ti\Phi(A) = \begin{cases}\,\,\,\,\,\, \frac12{\rm tr\,\,arg}(I_{2n}+i p(A))&\mbox{if } A\notin \mathcal S \\
\frac\pi2+\frac12{\rm tr\,\,arg}(I_{\mathbb R^{2n}}+i p(A)^+)& \mbox{if } A\in \mathcal S. \end{cases}
\ee
A similar formula holds  for $\underline{\ti\Phi}(A)$). Note that, away from ${\mathcal S}$, both $\ti\Phi(A)$ and $\underline{\ti\Phi}(A)$ are differentiable functions. Thus we have:
\begin{prop}
 $\Phi$ is a   differentiable function on  ${\rm Sym}^2(\mathbb R^{2n+2})\backslash\mathcal S$. Furthermore, ${\ti\Phi}$ and $\underline{\ti\Phi}$ are the smallest and largest upper and lower semi-continuous functions extending $\Phi$ over $\mathcal S$. 
\end{prop}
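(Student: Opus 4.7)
The plan is to prove both claims by exploiting the factorization \eqref{detB}, which in the case $\eta=0$ reads
\[
\det B_0^{\mathbb C}=\det B_+^{\mathbb C}\bigl(i{\bf a}_{11}+\vec{\bf a}_1(B_+^{\mathbb C})^{-1}\vec{\bf a}_1^{*}\bigr).
\]
The first factor never vanishes, since its eigenvalues $1+i\lambda_i^+$ are never zero, and it contributes a smooth $\mathbb R$-valued argument equal to $\tfrac12{\rm tr\,arg}(I_{\mathbb R^{2n}}+ip(A)^+)$. By Lemma \ref{matrixfun} the scalar second factor has real part $\vec{\bf a}_1(B_+^{\mathbb C})^{-1}\vec{\bf a}_1^{*}\ge 0$, vanishing iff $\vec{\bf a}_1=0$, and imaginary part ${\bf a}_{11}$; it therefore vanishes precisely when ${\bf a}_{11}=0$ and $\vec{\bf a}_1=0$, which by definition of $\mathcal S$ is the condition $A\in\mathcal S$. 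Thus for $A\notin\mathcal S$ we have $\det B_0^{\mathbb C}\ne 0$; combined with Lemma \ref{matrixfun} keeping all eigenvalues in the closed right half plane, Corollary 3.5 of \cite{RS} selects a smooth branch of the argument, so $\Phi$ depends smoothly on $A$ throughout ${\rm Sym}^2(\mathbb R^{2n+2})\setminus\mathcal S$.

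For the extension claims, fix $A_0\in\mathcal S$ and a sequence $A_k\to A_0$ with $A_k\notin\mathcal S$. The factorization splits
\[
\Phi(A_k)=\tfrac12{\rm tr\,arg}\bigl(I_{\mathbb R^{2n}}+ip(A_k)^+\bigr)+{\rm arg}\bigl(i{\bf a}_{11}^k+\vec{\bf a}_1^k(B_+^{\mathbb C})^{-1}(\vec{\bf a}_1^k)^{*}\bigr),
\]
and the first term converges continuously to $\tfrac12{\rm tr\,arg}(I_{\mathbb R^{2n}}+ip(A_0)^+)$. The second term has nonnegative real part and imaginary part ${\bf a}_{11}^k$, so its argument lies in $[-\tfrac{\pi}{2},\tfrac{\pi}{2}]$. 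The maximum value $+\tfrac{\pi}{2}$ is attained in the limit by sequences with $\vec{\bf a}_1^k\equiv 0$ and ${\bf a}_{11}^k\to 0^+$, the minimum $-\tfrac{\pi}{2}$ by $\vec{\bf a}_1^k\equiv 0$ and ${\bf a}_{11}^k\to 0^-$, and every intermediate value by suitable joint decay rates. Hence
\[
\limsup_{A\to A_0,\,A\notin\mathcal S}\Phi(A)=\ti\Phi(A_0),\qquad\liminf_{A\to A_0,\,A\notin\mathcal S}\Phi(A)=\underline{\ti\Phi}(A_0).
\]

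These identities deliver both extension claims. Any upper semi-continuous extension $f$ of $\Phi$ must satisfy $f(A_0)\ge\limsup_{A\to A_0}f(A)\ge\ti\Phi(A_0)$, so $f\ge\ti\Phi$; symmetrically, any lower semi-continuous extension is bounded above by $\underline{\ti\Phi}$. To confirm that $\ti\Phi$ is itself upper semi-continuous on $\mathcal S$ (and $\underline{\ti\Phi}$ lower semi-continuous), one extends the limit analysis to mixed sequences $A_k\to A_0$ with some $A_k\in\mathcal S$; this is routine since the truncated formula $\tfrac12{\rm tr\,arg}(I_{\mathbb R^{2n}}+ip(A)^+)$ is continuous along $\mathcal S$. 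The principal subtlety is tracking ${\rm arg}$ as an $\mathbb R$-valued rather than $S^1$-valued quantity across branch crossings; the factorization \eqref{detB} makes this tractable by isolating every jump into the single scalar factor, whose argument takes values in the closed interval $[-\tfrac{\pi}{2},\tfrac{\pi}{2}]$ and can be parameterized explicitly.
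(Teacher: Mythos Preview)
Your argument is correct and follows exactly the route the paper intends: the paper does not give a standalone proof of this proposition but instead defers to \cite{RS}, having set up the Hermitian analogue via Lemma~\ref{matrixfun} and the factorization~\eqref{detB}, and your write-up is precisely the explicit version of that argument. One small imprecision: the imaginary part of the scalar factor $i{\bf a}_{11}+\vec{\bf a}_1(B_+^{\mathbb C})^{-1}\vec{\bf a}_1^{*}$ is ${\bf a}_{11}+{\rm Im}\bigl(\vec{\bf a}_1(B_+^{\mathbb C})^{-1}\vec{\bf a}_1^{*}\bigr)$, not just ${\bf a}_{11}$, but this does not affect your limit computation since the extremal sequences you exhibit have $\vec{\bf a}_1^k\equiv 0$.
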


At last we can define our Dirichlet set for the lifted Lagrangian angle. Consider the set
\be
\cF_c:=\{A\in{\rm  Sym}^2(\mathbb R^{2n+2})\,|\,\ti\Phi(A)\geq c\}.\nonumber
\ee
The following result is a complex analogue of Theorem 5.1 from \cite{RS}. The proof follows in a similar fashion.
\begin{prop}
\label{localsub}
If $|c|<(n+1)\frac\pi2$, then $\cF_c\subset {\rm  Sym}^2(\mathbb R^{2n+2})$ is closed and non-empty. Additionally $\cF_c=\overline{{\rm Int} \cF_c}$. Furthermore, $\cF_c$ is  Dirichlet set in the sense of \eqref{(P)}, with its dual given by $\ti\cF_c=\cF_{-c}.$
\end{prop}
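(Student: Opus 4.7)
The plan is to establish each of the four assertions in turn, using the explicit formula \eqref{liftedspacetimeanlge} for $\ti\Phi$ together with monotonicity of the lifted angle under the L\"owner order on symmetric matrices. Closedness is immediate from the fact that $\ti\Phi$ is upper semi-continuous by construction. For non-emptiness, I would take $A = t\,{\rm Id}_{\mathbb R^{2n+2}}$ for large $t>0$. Since $p(A) = tId$ has $a_{11} = t \neq 0$, we have $A \notin \mathcal S$, and the eigenvalues of $I_{2n} + i\,p(A)$ are $it$ (multiplicity $2$) and $1+it$ (multiplicity $2n$), which gives $\ti\Phi(A) = \tfrac{\pi}{2} + n\arctan(t) \to (n+1)\tfrac{\pi}{2}$ as $t\to\infty$. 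Hence $\cF_c$ is non-empty whenever $c < (n+1)\tfrac\pi2$.

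For the Dirichlet property and the density $\cF_c = \ov{{\rm Int}\,\cF_c}$, the key ingredient is monotonicity: for any $P \geq 0$, one has $\ti\Phi(A+P) \geq \ti\Phi(A)$, with strict inequality when $P > 0$. Away from $\mathcal S$, this follows from the classical eigenvalue monotonicity for Hermitian matrices under the L\"owner order, combined with the monotonicity of $\arctan$ applied to each eigenvalue; across $\mathcal S$, the same argument applied to the truncated matrix $I_{\mathbb R^{2n}} + i\, p(A)^+$, together with Lemma \ref{matrixfun}, shows that both extensions are monotone. Granting this, the Dirichlet condition $\cF_c + \mathcal P \subset \cF_c$ is immediate. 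Moreover, for any $A \in \cF_c$ and $\e > 0$, strict monotonicity yields $\ti\Phi(A + \e\,Id) > c$, which places $A + \e \,Id$ in the interior of $\cF_c$; letting $\e\to 0$ gives the density.

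For the dual equality $\ti\cF_c = \cF_{-c}$, the essential observation is the symmetry $\underline{\ti\Phi}(-A) = -\ti\Phi(A)$. Away from $\mathcal S$ this follows by noting that negating $A$ conjugates the matrix $I_{2n} + ip(A)$ and therefore negates the argument of each of its eigenvalues; the shift by $\pi$ between $\ti\Phi$ and $\underline{\ti\Phi}$ on $\mathcal S$ precisely matches the swap between upper and lower semi-continuous extensions under this negation. Using this symmetry, I would first identify
\[
{\rm Int}\,\cF_c = \{A \in {\rm Sym}^2(\mathbb R^{2n+2}) \,:\, \underline{\ti\Phi}(A) > c\},
\]
which is open because $\underline{\ti\Phi}$ is lower semi-continuous, and then compute
\[
\ti \cF_c = \sim(-{\rm Int}\,\cF_c) = \{A \,:\, \underline{\ti\Phi}(-A) \leq c\} = \{A \,:\, \ti\Phi(A) \geq -c\} = \cF_{-c}.
\]

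The main subtlety I expect is in handling the degenerate locus $\mathcal S$, where $\ti\Phi$ is discontinuous and jumps by $\pi$ relative to $\underline{\ti\Phi}$. One must carefully verify that the interior of $\cF_c$ is characterized by $\underline{\ti\Phi} > c$ rather than $\ti\Phi > c$, and that the monotonicity and negation arguments respect the piecewise definition in \eqref{liftedspacetimeanlge}. Once this bookkeeping across $\mathcal S$ is in place, each claim reduces to a statement about eigenvalues of the Hermitian matrix $B^{\mathbb C}_0$ from Lemma \ref{matrixfun}, paralleling the real case treated in Theorem 5.1 of \cite{RS}.
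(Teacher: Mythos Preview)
Your overall strategy matches the paper's approach, which simply states that the proof follows Theorem~5.1 of \cite{RS} and gives no further detail. Your outline reconstructs that argument correctly: closedness from upper semi-continuity of $\ti\Phi$, non-emptiness via a large multiple of the identity, the Dirichlet property and density from monotonicity of $\ti\Phi$ in the L\"owner order, and duality from the symmetry $\underline{\ti\Phi}(-A)=-\ti\Phi(A)$ together with the identification ${\rm Int}\,\cF_c=\{\underline{\ti\Phi}>c\}$.

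There is, however, a gap in your justification of monotonicity. You write that away from $\mathcal S$ it ``follows from the classical eigenvalue monotonicity for Hermitian matrices under the L\"owner order, combined with the monotonicity of $\arctan$ applied to each eigenvalue.'' That reasoning is valid for the ordinary angle $\ti\Theta$, because $Id$ commutes with $p(A)$ and the eigenvalues of $Id+ip(A)$ are $1+i\lambda_k$, yielding the arctan formula. For $\ti\Phi$ the degenerate matrix $I_{2n}$ does \emph{not} commute with $p(A)$ in general, so the eigenvalues of $I_{2n}+ip(A)$ are not of the form $1+i\lambda_k$ and there is no arctan-of-eigenvalues expression to invoke. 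One correct route, following \cite{RS}, is to use the representation $\Phi(A)=\lim_{\eta\to\infty}\ti\Theta(A_\eta)$ with $A_\eta=I^\eta_{2n}\,A\,I^\eta_{2n}$ (cf.\ the proof of Lemma~\ref{convex}): congruence by a fixed matrix preserves the L\"owner order, so $A\leq A'$ implies $A_\eta\leq A'_\eta$, hence $\ti\Theta(A_\eta)\leq\ti\Theta(A'_\eta)$ by your arctan argument, and monotonicity of $\Phi$ follows in the limit. Alternatively one differentiates $\Phi$ directly and shows the gradient pairs positively with $p(P)$ because ${\rm Re}\big((I_{2n}+ip(A))^{-1}\big)$ is positive semi-definite. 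Either fix repairs the step; the rest of your outline goes through as written.
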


We now define our desired subequation on $M$ using local jet-equivalence. The 2-jet bundle on $M$ decomposes as
 \be
 J^2(M)={\mathbb R}\oplus T^*M\oplus {\rm Sym}^2(T^*M).\nonumber
 \ee
Consider the bundle automorphism $ \sigma: J^2(M)\rightarrow  J^2(M)$ defined by
 \be
 \sigma(r,\ell,A)=(r,\ell, A+\iota(\Lambda)(x)).\nonumber
 \ee
Set $\mathcal F_{\sigma,c}|_U:=\sigma^{-1}(\mathcal F_c)$. We then have 
 \be
 A\in \mathcal F_{\sigma,c}\,\iff\, \ti\Phi(\iota(\Lambda(p))+A_p)\geq c.\nonumber
 \ee
at each point $p\in M$, using that $p(\iota(\Lambda))=\iota(\Lambda)$.

To identify the dual of $\mathcal F_{\sigma,c}$, we use that $\mathcal F_{-c}$ is the local dual of $\mathcal F_{c}$. Consider the bundle automorphism
 \be
 \ti \sigma(r,\ell,A)=(r,\ell, A-\iota(\Lambda)(x)),\nonumber
 \ee
and set $\mathcal F_{\ti\sigma,-c}|_U:=\ti\sigma^{-1}(\mathcal F_{-c})$. We then have 
 \be
 A\in {\mathcal F}_{\ti\sigma,-c}\,\iff\, \ti\Phi(-\iota(\Lambda(p))+A_p)\geq -c\nonumber
 \ee
at each point $p\in M$. Just as above, we apply Lemma 6.14 in \cite{HL2} to conclude that ${\mathcal F}_{\ti\sigma,-c}$ is the dual of $\mathcal F_{\sigma,c}$.  In conclusion we have:
\begin{prop}
\label{localangle3}
For $|c|<\frac{(n+1)\pi}2$, the set $\mathcal F_{\sigma,c}\subset J^2(M)$ defines a global subequation, with its dual is given by $\mathcal F_{\ti \sigma,-c}$.
\end{prop}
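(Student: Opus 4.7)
The plan is to mirror the argument behind Lemma \ref{localangle2} for the standard Lagrangian angle case, promoting the pointwise Proposition \ref{localsub} to the bundle $J^2(M)$ via the bundle automorphism $\sigma$, and then invoking Lemma 6.14 of \cite{HL2} to identify the dual. Most of the analytic content has been absorbed into Proposition \ref{localsub}; what remains is to check that $\sigma$ is a bona fide bundle automorphism, that the resulting set is globally well-defined, and that the duality computation goes through.

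First I would confirm that $\sigma(r,\ell,A)=(r,\ell,A+\iota(\Lambda)(x))$ defines an affine bundle automorphism on $J^2(M)$. The endomorphism $\Lambda=\omega^{-1}\alpha$ is intrinsic on $X$ and pulled back to $M=A\times X$ along $\pi$; in particular, $\iota(\Lambda)$ has no entries in the block corresponding to the $A$-directions, matching the $I_{2n}$-block that enters the definition of the space-time angle $\ti\Phi$. Since $p(\iota(\Lambda))=\iota(\Lambda)$, the pointwise condition $\ti\Phi(\iota(\Lambda(p))+A_p)\geq c$ depends only on $\omega$, $\alpha$, and $J$---all intrinsic data---so the local pieces $\mathcal F_{\sigma,c}|_U$ glue unambiguously into a global set $\mathcal F_{\sigma,c}\subset J^2(M)$.

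Next I would verify fiberwise the two properties that make $\mathcal F_{\sigma,c}$ a subequation of the restricted type considered in Section \ref{sub}. Because $\sigma$ acts on the symmetric component only by a translation and trivially on the $(r,\ell)$ components, it commutes with the addition of elements of $\mathcal P$, so the Dirichlet property $\mathcal F_{\sigma,c}+\mathcal P\subset \mathcal F_{\sigma,c}$ transfers directly from the corresponding property of $\mathcal F_c$ given by Proposition \ref{localsub}. Fiberwise $\sigma$ is a homeomorphism, so ${\rm int}(\sigma^{-1}(\mathcal F_c))=\sigma^{-1}({\rm int}(\mathcal F_c))$ and likewise for closures, and hence $\mathcal F_{\sigma,c}=\overline{{\rm int}(\mathcal F_{\sigma,c})}$ also transfers from Proposition \ref{localsub}.

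For the duality, I would argue directly at a point $p$: a 2-jet $A$ lies in $\mathcal F_{\ti\sigma,-c}$ iff $A-\iota(\Lambda)\in \mathcal F_{-c}=\ti{\mathcal F}_c$ by Proposition \ref{localsub}, which unfolds to $-A+\iota(\Lambda)\notin {\rm int}(\mathcal F_c)$. Writing this as $\sigma(-A)\notin {\rm int}(\mathcal F_c)$ gives $-A\notin {\rm int}(\mathcal F_{\sigma,c})$, which is precisely the condition $A\in \ti{\mathcal F}_{\sigma,c}$. This realizes Lemma 6.14 of \cite{HL2} in the present setting. The one technical obstacle I anticipate is the behavior of $\ti\Phi$ across the singular set $\mathcal S$, where it is only upper semi-continuous, so that $\mathcal F_{\sigma,c}=\overline{{\rm int}(\mathcal F_{\sigma,c})}$ requires some care near $\mathcal S$. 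This is already addressed in Proposition \ref{localsub}, and since $\iota(\Lambda)$ is block-diagonal in a form compatible with the decomposition defining $\mathcal S$, the argument passes through $\sigma^{-1}$ without further obstruction.
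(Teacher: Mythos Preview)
Your proposal is correct and follows essentially the same approach as the paper: transfer Proposition \ref{localsub} to $J^2(M)$ via the affine bundle automorphism $\sigma$, and then invoke Lemma 6.14 of \cite{HL2} for the duality. The paper itself is extremely terse here---the proposition is stated as a summary of the preceding paragraph with no separate proof---so your explicit fiberwise duality computation $A\in\mathcal F_{\ti\sigma,-c}\iff -A\notin{\rm int}(\mathcal F_{\sigma,c})$ simply spells out what Lemma 6.14 encapsulates, and your checks of the Dirichlet property and $\overline{{\rm int}}$ condition under $\sigma$ are more thorough than what the paper records.
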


\section{Solving the geodesic equation}
\label{proof}

We work on  $M:=A\times X$, where $A:=\{s\in\mathbb C\,|\, 1\leq |s|\leq 2\}$, and $X$ is our given compact K\"ahler manifold.  Denote by $\pi_1$ and $\pi_2$ the projections onto the first and second factor. Furthermore, let $D$ denote the differential on $M$, and $\partial$ the differential on $X$. 

Recall the space $\mathcal H$ of ``positive'' potentials for $[\alpha]$ from \eqref{space}. Given any two potentials $\phi_1,$ $\phi_2\in \mathcal H$, the goal of this section is to prove that there exists a weak, $C^0$-geodesic connecting $\phi_1$ to $\phi_2$.
\begin{thm}
\label{continousgeo}
Assume that  ${\rm osc}_X\Theta(\alpha)<\pi$, so the average angle $\hat\theta$ lifts to a real number $c$, which we assume satisfies  $(n-1)\frac\pi2<c<n\frac\pi2$. Fix $\phi_1,\phi_2\in\mathcal H$. Then there exists a continuous function $u:M\rightarrow\mathbb R$ satisfying $u(1,z)=\phi_1(z)$ and $u(2,z)=\phi_2(z)$, which is $F_{\sigma,c}$-Harmonic on $M$:
\be
\label{geos2}
u\in \mathcal F_{\sigma,c}(M)\cap-\mathcal F_{\ti\sigma,-c}(M).
\ee
Furthermore, 
\be
\label{geos3}
u(s_0,z)\in F_{\sigma, c-\frac\pi2}(X)\cap -F_{\ti\sigma, -c-\frac\pi2}(X)
\ee
for all $s_0\in A$. As a result $u$ is a weak solution to \eqref{geos}.
\end{thm}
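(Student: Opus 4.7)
The plan is to construct $u$ via the Perron method within the Harvey--Lawson framework. Writing $\Phi_{\partial}$ for the continuous function on $\partial M$ given by $\Phi_{\partial}=\phi_{i}$ on $\{|s|=i\}\times X$ for $i=1,2$, set
\be
u(p):=\sup\{v(p):v\in\cF_{\sigma,c}(M),\ v^*|_{\partial M}\le \Phi_{\partial}\}.\nonumber
\ee
Given (i) a comparison principle for the subequation pair $(\cF_{\sigma,c},\cF_{\tilde\sigma,-c})$ and (ii) sub- and super-barriers realizing $\Phi_{\partial}$ on $\partial M$, the standard machinery of \cite{HL1,HL2} then yields that the USC regularization $u^*$ lies in $\cF_{\sigma,c}(M)$, the LSC regularization $u_*$ satisfies $-u_{*}\in\cF_{\tilde\sigma,-c}(M)$, both boundary values are attained, and $u=u^*=u_*\in C^{0}(\overline M)$; this gives \eqref{geos2}.

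The main obstacle is the comparison principle itself. As highlighted in the introduction, Harvey--Lawson comparison on $M=A\times X$ requires approximating subsolutions from above by strict subsolutions, and on the compact factor $X$ the maximum principle rules out the usual device of adding a strictly subharmonic function. The substitute is convexity of the level sets of $\ti\Phi$ in the highest branch $c>(n-1)\pi/2$, the space-time analogue of the classical convexity result for the Lagrangian angle. Combining this convexity with the Dirichlet-set property from Section \ref{sub}, any $v\in\cF_{\sigma,c}(M)$ is approximated from above by
\be
v_\ve(s,z):=v(s,z)+\ve\,\chi(|s|),\nonumber
\ee
with $\chi$ smooth, radial, and strongly convex on $[1,2]$. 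The added Hessian is positive and supported in the Euclidean $A$-directions, and convexity of $\cF_{c}$ promotes $v_\ve$ to a strict $\cF_{\sigma,c+\delta(\ve)}$-subsolution for some $\delta(\ve)>0$. Strictness is thus produced by the annulus rather than by $X$, and the Harvey--Lawson comparison argument then applies.

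For boundary barriers, the hypothesis $\phi_i\in\mathcal H$ gives $\phi_i\in F_{\sigma,c-\pi/2}(X)$ by Lemma \ref{localangle2}, while the degenerate-case formula \eqref{liftedspacetimeanlge} shows that the pullback $\pi_{2}^{*}\phi_i$ is $\cF_{\sigma,c}$-subharmonic on $M$. Lower and upper barriers for $\Phi_{\partial}$ are then constructed from appropriate combinations of $\pi_{2}^{*}\phi_{1}$, $\pi_{2}^{*}\phi_{2}$, and radial corrections $h(|s|)$; the gap $(n-1)\pi/2<c<n\pi/2$ provides exactly enough slack to absorb the Lagrangian-angle contribution of the radial piece for both $\cF_{\sigma,c}$ and its dual. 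These barriers force $u^{*}|_{\partial M}=u_{*}|_{\partial M}=\Phi_{\partial}$.

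Finally, to deduce the slicewise condition \eqref{geos3}, fix $s_{0}\in A$ and let $\psi\in C^{2}(X)$ touch $u(s_{0},\cdot)$ from above at $z_{0}$. For each $a>0$ the test function $\phi_{a}(s,z):=\psi(z)+\tfrac{a}{2}|s-s_{0}|^{2}$ touches $u$ from above on $M$ at $(s_{0},z_{0})$, whence $\ti\Phi(\iota(\Lambda)+D^{2}\phi_{a})\ge c$. Letting $a\downarrow 0$, the jet degenerates into $\mathcal S$, and formula \eqref{liftedspacetimeanlge} yields
\be
\tfrac{\pi}{2}+\ti\Theta\bigl(\iota(\Lambda)+p(D^{2}\psi)\bigr)\ge c,\nonumber
\ee
i.e.\ $u(s_{0},\cdot)\in F_{\sigma,c-\pi/2}(X)$. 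The dual inclusion follows analogously from $-u\in\cF_{\tilde\sigma,-c}(M)$ using $\underline{\ti\Phi}$ in place of $\ti\Phi$. Equation \eqref{geos} is then recovered from \eqref{geos2} and \eqref{geos3} by unwinding the algebraic identity established in the proof of Lemma \ref{reformulation1}.
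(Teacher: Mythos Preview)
Your overall architecture (Perron method, barriers, comparison) matches the paper, and you have correctly identified convexity of $\cF_c$ in the top branch as the crucial ingredient. But the way you deploy convexity to manufacture strict subsolutions does not work, and this is a genuine gap.

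You propose approximating an arbitrary $v\in\cF_{\sigma,c}(M)$ by $v_\ve=v+\ve\,\chi(|s|)$ with $\chi$ radially convex, asserting that the positive Hessian in the annular direction, together with convexity of $\cF_c$, pushes $v_\ve$ into the interior. Neither mechanism does this. The Dirichlet property $F+\mathcal P\subset F$ only guarantees that $v_\ve$ remains a subsolution, not that it becomes strict; and convexity concerns convex combinations, not translations by a positive matrix. Concretely, take $A=\mathrm{diag}(0,A^+)\in\mathcal S$ with $\Theta(A^+)=c-\tfrac{\pi}{2}$, so $\ti\Phi(A)=c$ and $A\in\partial\cF_c$. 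Adding $\ve$ in the $s$--slot gives $A_\ve=\mathrm{diag}(\ve,A^+)\notin\mathcal S$, and from \eqref{detB} (with $\vec{\bf a}_1=0$) one computes $\det B^{\mathbb C}_\ve = i\ve\cdot\det B^{\mathbb C}_+$, whence $\Phi(A_\ve)=\tfrac{\pi}{2}+\Theta(A^+)=c$. Thus $A_\ve\in\partial\cF_c$ for every $\ve>0$: strictness is \emph{not} produced by the annulus. This is exactly the degeneracy of the equation, coming from the zero block of $I_{2n}$ in the $s$--direction.

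The paper's remedy uses convexity the right way. One first builds explicit smooth functions $u_1,u_2$ (of the form $\phi_i+\rho+\text{radial}$) that are \emph{strictly} $\cF_{\sigma,c}$--subharmonic; strictness here comes from the hypothesis $\phi_i\in\mathcal H$, which forces $\Theta(\alpha_{\phi_i})>c-\tfrac{\pi}{2}+\delta$ on $X$, not from the radial piece. Then for the Perron function $\Psi$ one forms the \emph{convex combination} $\Psi_{i,\ve}=(1-\ve)\Psi+\ve\,u_i$. Since $u_i$ lands in $\mathrm{int}(\cF_{\sigma,c})$ and $\Psi$ in $\cF_{\sigma,c}$, Lemma \ref{convex} places the segment in the interior, so $\Psi_{i,\ve}$ is strictly subharmonic. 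Weak comparison against $-\Psi_*\in\cF_{\ti\sigma,-c}$ then gives $\Psi\le\Psi_*$ and hence continuity. Your additive perturbation cannot be repaired without essentially reintroducing this convex--combination step.

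A smaller issue: in your argument for \eqref{geos3} you assert that $\phi_a(s,z)=\psi(z)+\tfrac{a}{2}|s-s_0|^2$ touches $u$ from above for \emph{each} $a>0$. That is false in general; one only gets this for $a$ sufficiently large (after the usual viscosity adjustments). Since for block--diagonal jets with $\vec a_1=0$ one has $\Phi=\tfrac{\pi}{2}+\Theta(A^+)$ independently of $a>0$, the limit $a\downarrow0$ is in any case unnecessary. The paper instead proves \eqref{geos3} via the uniform bound $|\ti\Phi(A)-\Theta(A^+)|\le\tfrac{\pi}{2}$, which is cleaner and avoids the touching issue altogether.
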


We remark that even though $\mathcal H$ is defined for $C^\infty$ potentials, our argument only requires the potentials to be in $C^2$.

Before we prove the existence result, we first show that if  $u\in C^2(M)$, equation \eqref{geos2}  implies \eqref{geos}. By the remark following Definition \ref{harmonicdef}, \eqref{geos2}  gives $J^2u\in \partial   \mathcal F_{\sigma,c}(M).$ Now, fix a point in $x\in M$, and consider two cases, beginning with the case that $p({\rm Hess}_x\,u)\notin {\mathcal S}$. Because $\iota(\Lambda)\in\mathcal S$, we have $\iota(\Lambda)+p({\rm Hess}_x\,u)\notin\mathcal S$, and so $J^2u\in \partial   \mathcal F_{\sigma,c}(M)$ gives
\be
c=\ti\Phi(\iota(\Lambda)+p({\rm Hess}_x\,u))=\Phi(\iota(\Lambda)+p({\rm Hess}_x\,u)).\nonumber
\ee
For notational simplicity set $A_u:=\iota(\Lambda)+p({\rm Hess}_x\,u)$. Since $e^{ i\hat\theta}=e^{ i c}$, this implies 
\be
{\rm Im}\left(e^{- i\hat\theta}{\rm det}(I_{2n}+iA_u)^{\frac12}\right)=0,\nonumber
\ee
which, working in normal coordinates, we see is equivalent to \eqref{geos}. 

We now assume that  $p({\rm Hess}_x\,u)\in {\mathcal S}$, which  implies $A_u\in\mathcal S$. In this case we see right away that ${\rm det}\left(I_{2n}+i A_u\right)=0,$
which certainly implies  \eqref{geos}. Unlike the previous case, here we do not have  that  \eqref{geos} implies \eqref{geos2}. However, if in addition we assume \eqref{geos3} (or the stonger condition \eqref{geos5}),  we have
\be
\label{cosine}
{\rm cos}\left(\Theta(A_u)-\hat\theta\right)\geq0.
\ee
Although $\hat\theta$ is only defined modulo $2\pi$, we have specified a branch and thus can set $\hat\theta=c$. Because $u$ is continuous this implies
\be
\label{finalthing}
c-\frac\pi2\leq\Theta(A_u)\leq c+\frac\pi2.
\ee
Then, because $A_u\in\mathcal S$, by definition of $\ti\Phi$ we have $\ti\Phi(A_u)=\frac\pi2+\Theta(A_u)\geq c.$
So $u\in F_{\sigma,c}(M)$. Additionally, we have $\ti\Phi(-A_u)=\frac\pi 2-\Theta(A_u)\geq -c,$ which implies $u\in -F_{\ti\sigma,-c}(M)$. Thus \eqref{geos2} is satisfied.

Finally, we remark that condition is \eqref{geos3} (which is equivalent to \eqref{finalthing} for $C^2$ functions) is not quite as strong as \eqref{geos5}, since the former specifies a weak inequality while that latter a strict inequality. However, the above theorem is the best result we can prove with our methods.

Now, as stated in the introduction, our theorem is similar to Theorem 8.1 in \cite{RS}, in that we utilize Dirichlet Duality theory to solve a weak geodesic equation. However, there are some substantial differences. Most importantly, Rubinstein-Solomon work on the product of a domain in Euclidean space with an interval, and we work on $M:=X\times A$. Thus in our case $\partial M$ is smooth, whereas Rubinstein-Solomon   work on a manifold with corners.  This simplifies our boundary estimates, allowing us to appeal to general theory from \cite{HL2}. However, one key difficulty that arises is that, because our cross section $X$ is a compact manifold, there are no global sub-Harmonic functions on $M$, as one has on a domain in Euclidean space. This presents a major obstacle in proving the comparison theory, which is detailed below. To get around this difficulty, we make use of the convexity of the level set $\ti\Phi(\cdot)\geq c$, in the case that $(n-1)\frac\pi2<c<n\frac\pi2$. 

\begin{lem}
\label{convex}
The set $\cF_c:=\{A\in{\rm Sym}^2(\mathbb R^{2n+2})\,|\,\ti\Phi(A)\geq c\}$ is convex for $(n-1)\frac\pi2<c<n\frac\pi2$. 

\end{lem}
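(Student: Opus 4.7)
The plan is to approximate $\tilde\Phi$ by a family of smooth angles $\Phi_\eta$ whose super-level sets can be shown convex by reducing to the standard Lagrangian phase operator, and then pass to the limit $\eta \to 0^+$ using the upper semi-continuity of $\tilde\Phi$. For $\eta > 0$ set
$$
\Phi_\eta(A) := \tfrac12\,{\rm tr\,\,arg}(I^\eta_{2n}+i\,p(A)),
$$
where $I^\eta_{2n} = {\rm diag}(\eta,\eta,1,\ldots,1)$. By Lemma \ref{matrixfun} the eigenvalues of $I^\eta_{2n}+i\,p(A)$ have strictly positive real part, so (by Corollary 3.5 of \cite{RS}) $\Phi_\eta$ is a smooth, $\mathbb R$-valued function on ${\rm Sym}^2(\mathbb R^{2n+2})$.

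I would next pass to the Hermitian picture. Let $A^{\mathbb C}\in{\rm Herm}(\mathbb C^{n+1})$ satisfy $\iota(A^{\mathbb C})=p(A)$, and let $I^{\eta,\mathbb C}:={\rm diag}(\eta,1,\ldots,1)$. Then
$$
I^{\eta,\mathbb C}+iA^{\mathbb C}=(I^{\eta,\mathbb C})^{1/2}\bigl(I+iH_\eta\bigr)(I^{\eta,\mathbb C})^{1/2},\qquad H_\eta:=(I^{\eta,\mathbb C})^{-1/2}A^{\mathbb C}(I^{\eta,\mathbb C})^{-1/2},
$$
so that (using $\arg\eta=0$) one obtains $\Phi_\eta(A)=\sum_{j=1}^{n+1}\arctan\lambda_j(H_\eta)$. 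Since $A\mapsto H_\eta$ is $\mathbb R$-linear and the classical hypercritical convexity of the Lagrangian phase on $(n+1)\times(n+1)$ Hermitian matrices (Caffarelli--Nirenberg--Spruck, Yuan) gives that the super-level set $\{H:\sum_{j}\arctan\lambda_j(H)\geq c\}$ is convex for $c>((n+1)-2)\pi/2=(n-1)\pi/2$, I conclude that $\{A:\Phi_\eta(A)\geq c\}$ is convex for every such $c$ and every $\eta>0$.

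A direct block computation then shows: for $A\notin\mathcal S$, $\Phi_\eta(A)\to\Phi(A)=\tilde\Phi(A)$ continuously as $\eta\to0^+$; while for $A\in\mathcal S$, $\Phi_\eta(A)=\Theta(p(A)^+)=\tilde\Phi(A)-\tfrac\pi2$ exactly. Now take $A_0,A_1\in\mathcal F_c\setminus\mathcal S$. For any small $\varepsilon>0$ with $c-\varepsilon>(n-1)\pi/2$, choose $\eta>0$ small so that $\Phi_\eta(A_i)\geq c-\varepsilon$; by the previous step $\Phi_\eta(A_t)\geq c-\varepsilon$ on the segment $A_t=(1-t)A_0+tA_1$. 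Sending $\eta\to0^+$: if $A_t\notin\mathcal S$, continuity yields $\tilde\Phi(A_t)\geq c-\varepsilon$; if $A_t\in\mathcal S$, the $\pi/2$ jump strengthens this to $\tilde\Phi(A_t)\geq\tfrac\pi2+c-\varepsilon\geq c$. Letting $\varepsilon\to0$ gives $A_t\in\mathcal F_c$.

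To handle the general case when one of $A_0,A_1$ lies in $\mathcal S$, approximate each such endpoint by $A_i^k\to A_i$ with $A_i^k\notin\mathcal S$ and $\tilde\Phi(A_i^k)\geq c-1/k$; such sequences exist from the characterization of $\tilde\Phi$ as the upper semi-continuous envelope of $\Phi$, by approaching the degeneracy from the side on which the small eigenvalue has positive imaginary part. Applying the previous paragraph to each $k$ and then sending $k\to\infty$, the upper semi-continuity of $\tilde\Phi$ yields $\tilde\Phi(A_t)\geq\limsup_k\tilde\Phi(A_t^k)\geq c$, so $A_t\in\mathcal F_c$. The main technical subtlety throughout is precisely the $\pi/2$ discontinuity between $\Phi_\eta$ and $\tilde\Phi$ on $\mathcal S$; fortunately this jump is in the favorable direction, so it tightens rather than loosens the inequality in the limit.
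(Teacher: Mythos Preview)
Your proof is correct and rests on the same core idea as the paper's: regularize the degenerate direction so that the space-time angle becomes a genuine Lagrangian phase on $(n+1)\times(n+1)$ Hermitian matrices, then invoke the classical hypercritical convexity (Yuan, Caffarelli--Nirenberg--Spruck). The paper realizes this via $A\mapsto A_\eta:=I^\eta_{2n}AI^\eta_{2n}$ with $\eta\to\infty$, quoting Theorem~A.3 of \cite{RS} for $\Phi(A)=\lim_{\eta\to\infty}\ti\Theta(A_\eta)$; you instead replace $I_{2n}$ by $I^\eta_{2n}$ and send $\eta\to0^+$. Under the reparametrization $\eta_{\rm paper}=\eta_{\rm you}^{-1/2}$ these produce the \emph{same} linear map $A^{\mathbb C}\mapsto H_\eta$, so the reduction to the standard phase is literally identical. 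The organizational difference is in how $\mathcal S$ is handled: the paper perturbs the entire segment by $\delta\,Id$ to stay off $\mathcal S$, proves ${\rm int}(\cF_c)$ is convex, and then takes the closure---this avoids your separate case split on $A_t\in\mathcal S$ and the endpoint approximation of your final paragraph, at the cost of citing \cite{RS} for the limit. Your route has the virtue of making explicit why the $\pi/2$ jump is harmless. One point in your write-up that deserves an extra sentence: the factorization $B^{\mathbb C}_\eta=(I^{\eta,\mathbb C})^{1/2}(I+iH_\eta)(I^{\eta,\mathbb C})^{1/2}$ is a congruence rather than a similarity, so the eigenvalues of $B^{\mathbb C}_\eta$ are not $1+i\lambda_j(H_\eta)$ and the identity $\Phi_\eta(A)=\sum_j\arctan\lambda_j(H_\eta)$ does not follow from ``$\arg\eta=0$'' alone; it is nonetheless true, and the cleanest justification is continuity in $\eta$ (both sides agree at $\eta=1$, are continuous, and agree modulo $2\pi$ since $\det B^{\mathbb C}_\eta=\eta\det(I+iH_\eta)$).
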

\begin{proof}
We show that ${\rm int}(\cF_c)$ is convex, since the closure of a convex set is convex. Let $A_0$ and $A_1$ be two matrices in ${\rm int}(\cF_c)$, and $A_t=(1-t)A_0+tA_1$ the path connecting these two matrices. We demonstrate that $A_t\in {\rm int}(\cF_c)$.

First, we need to work away from the set $\mathcal S$ where where the function $\ti\Phi$ is not differentiable. To accomplish this, for every matrix in the path define the perturbed matrix
\be
A_t^\delta=A_t+\delta Id_{\mathbb R^{2n+2}},\nonumber
\ee
which will be in ${\rm Sym}^2(\mathbb R^{2n+2})\backslash\mathcal S$ for $\delta>0$ small. Since $A_t^\delta\rightarrow A_t$ as $\delta\rightarrow 0$, and $\ti\Phi$ is upper-semicontinuous, we know
\be
\ti\Phi(A_t)\geq \lim_{\delta\rightarrow0} \ti\Phi(A_t^\delta).\nonumber
\ee
Thus if we can show $\ti\Phi(A_t^\delta)\in {\rm int}(\cF_c)$ for $\delta$ sufficiently small, then $\ti\Phi(A_t)$ will be on the interior as well. We now have the benefit of using the differentiable function $\Phi$, as opposed to $\ti\Phi$, since  $A_t^\delta\notin\mathcal S$.

We use the following characterization of $\Phi$, taken from \cite{RS}. Recall  the matrix $I^\eta_{2n}$ from Lemma \ref{matrixfun}, given by the diagonal $(\eta,\eta,1,1,...,1)$.  
For $A\in{\rm Sym}^2(\mathbb R^{2n+2})$, define
\be
A_\eta:=I^\eta_{2n}A I^\eta_{2n}.\nonumber
\ee
Theorem A.3 in \cite{RS} demonstrates that if $A\notin\mathcal S$, one has
\be
\Phi(A)=\lim_{\eta\rightarrow \infty}\ti\Theta(A_\eta).\nonumber
\ee
Here $\ti\Theta$ denotes the lift of the standard angle of a matrix, defined in \eqref{thetadef}.

By assumption, for small $\delta$ we have both $A_0^\delta$ and $A_1^\delta$ lie in $\in {\rm int}(\cF_c)$, so $\Phi(A_i^\delta)>c$ for $i\in\{0,1\}$. Thus we can choose $\eta_0$ sufficiently large to ensure $\ti\Theta(A_{i,\eta}^\delta)>c$ for $\eta_0<\eta$. Recall that the definition of $\ti\Theta(\cdot)$ involves first projecting the input onto the $J$-invariant subspace of symmetric matrices, where all eigenvalues have multiplicity two. Thus, even though we are working on $\mathbb R^{2n+2}$, the angle is computed from $n+1$ distinct eigenvalues.  Because $c>(n-1)\frac\pi2=((n+1)-2)\frac\pi2$, we can apply Lemma 2.1 from \cite{Y} (see also \cite{CPW}) to conclude  set  of symmetric matrices satisfying $\ti\Theta(\cdot)>c$ is convex, and therefore must contain every matrix in the path 
\be
(1-t)A_{0,\eta}^\delta+tA_{1,\eta}^\delta=A_{t,\eta}^\delta.\nonumber
\ee
Thus for all $t$,
\be
\ti\Phi(A_t)\geq \lim_{\delta\rightarrow0} \ti\Phi(A_t^\delta)=\lim_{\delta\rightarrow0}\Phi(A_{t}^\delta)=\lim_{\delta\rightarrow0}\lim_{\eta\rightarrow \infty}\ti\Theta(A_{t,\eta}^\delta)>c,\nonumber
\ee
and so ${\rm int}(\cF_c)$ is convex.
\end{proof}

The above lemma implies that the global subequation $\cF_{\sigma,c}$ is also convex for $(n-1)\frac\pi2<c<n\frac\pi2$. We need one more result before we turn to our main theorem.

\begin{prop} 
\label{comparison}
$\cF_{\sigma,c}$ satisfies the following {\bf weak comparison} condition. Given any compact set $K\subset M$, whenever $u\in \cF_{\sigma,c}^\delta(K)$ for some $\delta>0$, and $v\in\cF_{\ti\sigma,-c}(K)$,  then
\be
\label{ZMP}
u+v\leq 0\,\,\,{\rm on}\,\,\,\partial K\qquad  \implies \qquad u+v\leq 0\,\,\,{\rm on}\,\,\, K.
\ee
Here the set $\cF_{\sigma,c}^\delta(K)$ is defined as in \eqref{strictsub}.
\end{prop}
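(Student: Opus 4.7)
The plan is to argue by contradiction and reduce the question to a pointwise comparison of matrices, using the Dirichlet property of $\cF_{\sigma,c}$ and its dual together with the convexity established in Lemma~\ref{convex}. The convexity plays the structural role that strictly subharmonic functions play in the Euclidean setting of \cite{RS}.

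Suppose for contradiction that $m := \sup_{K}(u+v) > 0$. Since $u,v \in {\rm USC}(K)$ and $u+v \le 0$ on $\partial K$, the sum attains an interior maximum at some $p_{0} \in {\rm int}(K)$. Working in a coordinate chart around $p_{0}$ identifying a neighborhood in $M$ with a ball in $\mathbb{R}^{2n+2}$, I would regularize by standard sup-convolution
\[
u^{\varepsilon}(x) = \sup_{y}\bigl[u(y) - \tfrac{1}{2\varepsilon}|x-y|^{2}\bigr],\qquad v^{\varepsilon}(x) = \sup_{y}\bigl[v(y) - \tfrac{1}{2\varepsilon}|x-y|^{2}\bigr].
\]
These regularizations are semi-convex, hence twice differentiable almost everywhere by Alexandrov's theorem, and they decrease monotonically to $u,v$ as $\varepsilon \downarrow 0$. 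The convexity of $\cF_{c}$ makes the translation averaging implicit in sup-convolution compatible with the subequation; combined with shrinking the chart so that the spatial variation of $\iota(\Lambda)(p)$ is small compared to the strictness margin $\delta$, this yields $u^{\varepsilon} \in \cF_{\sigma,c}^{\delta/2}$ and $v^{\varepsilon} \in \cF_{\ti\sigma,-c}$ on a slightly smaller neighborhood.

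For such $\varepsilon$ the sum $u^{\varepsilon}+v^{\varepsilon}$ still has a positive interior maximum at some $p^{\varepsilon} \to p_{0}$, and by a small translation I may assume that both $u^{\varepsilon}$ and $v^{\varepsilon}$ admit classical Alexandrov Hessians at $p^{\varepsilon}$. The maximality condition gives ${\rm Hess}\,u^{\varepsilon} + {\rm Hess}\,v^{\varepsilon} \le 0$ there, while the strict and dual subharmonicity conditions give
\[
{\rm Hess}\,u^{\varepsilon} + \iota(\Lambda) \in {\rm int}(\cF_{c}), \qquad {\rm Hess}\,v^{\varepsilon} - \iota(\Lambda) \in \cF_{-c}.
\]
Adding the non-negative matrix $-{\rm Hess}(u^{\varepsilon}+v^{\varepsilon})$ to the second relation and invoking the Dirichlet property $\cF_{-c} + \mathcal{P} \subset \cF_{-c}$ gives $-({\rm Hess}\,u^{\varepsilon} + \iota(\Lambda)) \in \cF_{-c}$. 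Since $\cF_{-c} = \ti\cF_{c} = \sim\!(-{\rm int}(\cF_{c}))$ by Proposition~\ref{localsub}, this contradicts the strict containment ${\rm Hess}\,u^{\varepsilon} + \iota(\Lambda) \in {\rm int}(\cF_{c})$.

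The hard part will be verifying the preservation of strict $\cF_{\sigma,c}$-subharmonicity under sup-convolution when the spatial shift $\iota(\Lambda)(p)$ varies with $p$, and handling the upper semi-continuous extension $\ti\Phi$ across the singular set $\mathcal{S}$ where $\Phi$ is not differentiable. The convexity of $\cF_{c}$, which holds only in the branch range $(n-1)\tfrac{\pi}{2} < c < n\tfrac{\pi}{2}$, is precisely what is needed for the averaging step to remain inside the subequation modulo a controlled loss in the strictness constant; this is also why Theorem~\ref{continousgeo} imposes the same branch restriction.
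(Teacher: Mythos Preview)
The paper's own proof of this proposition is a one-line citation: it is a direct consequence of Theorems~8.3 and~10.1 in \cite{HL2}. Those results say that any subequation which is locally (affinely) jet-equivalent to a constant-coefficient Euclidean subequation satisfies weak comparison. Since Section~\ref{angles} establishes precisely that $\cF_{\sigma,c}$ is locally jet-equivalent to $\cF_c$ via the automorphism $\sigma$, nothing further is needed. In particular, \emph{no convexity assumption on $\cF_c$ enters}, and the proposition holds for all $|c|<(n+1)\tfrac{\pi}{2}$, not only the highest branch.

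Your proposal contains a genuine misconception and a technical gap.

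\textbf{Misplaced role of convexity.} You assert that the convexity of $\cF_c$ (Lemma~\ref{convex}) is ``precisely what is needed'' for weak comparison, and that this is why Theorem~\ref{continousgeo} carries the branch restriction. This is not correct. Convexity is used in the paper only \emph{after} Proposition~\ref{comparison}, inside the Perron argument for Theorem~\ref{continousgeo}, to show that $\Psi_{i,\epsilon}=(1-\epsilon)\Psi+\epsilon u_i$ is strictly $\cF_{\sigma,c}$-subharmonic. Weak comparison itself requires only the Dirichlet (positivity) property and local jet-equivalence to a constant model. Relatedly, sup-convolution is not ``translation averaging'': it is a supremum over translated graphs minus a quadratic penalty, and its compatibility with a subequation comes from translation plus the strictness margin $\delta$ absorbing the spatial variation of $\iota(\Lambda)(p)$---not from convexity of $\cF_c$.

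\textbf{The ``small translation'' step.} You write that ``by a small translation I may assume that both $u^{\varepsilon}$ and $v^{\varepsilon}$ admit classical Alexandrov Hessians at $p^{\varepsilon}$.'' This is exactly the hard analytic step and cannot be dismissed in one clause. Semi-convex functions are twice differentiable only almost everywhere, and the interior maximum of $u^\varepsilon+v^\varepsilon$ may well sit in the null set for one or both. Obtaining a point where the Hessians exist simultaneously \emph{and} the sum has a local maximum requires Jensen's maximum principle or the Crandall--Ishii Theorem on Sums; this is precisely the machinery packaged into Theorems~8.3 and~10.1 of \cite{HL2}. If you want to give a self-contained argument, you must invoke one of these explicitly; otherwise the contradiction in your final display is not justified.

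Once the correct mechanism (jet-equivalence plus the Theorem on Sums) is identified, your pointwise endgame---using $\cF_{-c}+\mathcal P\subset\cF_{-c}$ and $\cF_{-c}=\sim(-{\rm int}\,\cF_c)$ to derive a contradiction---is the right local algebra, and matches what Harvey--Lawson do in general.
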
 
\begin{proof}
This is a direct consequence of Theorems  8.3 and 10.1 in \cite{HL2}. The result also holds if $v$ is strictly subharmonic, as opposed to $u$. 
\end{proof}
\begin{proof}[Proof of Theorem \ref{continousgeo}.]
We employ Perron's method. Our first goal is to construct a strictly $\mathcal F_{\sigma,c}$-subharmonic function with the correct boundary data.

As a first step, we demonstrate a bound on the oscillation of the angle on each boundary slice. Given our potentials $\phi_1$ and $\phi_2$   in $\mathcal H$, consider the associated $(1,1)$ forms $\alpha_i=\alpha+i \partial\bar\partial \phi_i$ for  $i\in\{1,2\}$. By the definition of $\mathcal H$ we know
\be
\label{osc}
{\rm cos}\left(\Theta(\alpha_i)-\hat\theta\right)>0.\nonumber
\ee
Since each $\phi_i$ is continuous, the angles $\Theta(\alpha_i)$ are continuous, from which we conclude that the oscillation of $\Theta(\alpha_i)$ is bounded by $\pi$. Now, although $\hat\theta$ is only defined modulo $2\pi$, we have specified a branch  and thus can set $\hat\theta=c$.  Thus, Lemma 2.4 from \cite{CXY} gives that the angles $\Theta(\alpha_i)$ must lie in the same branch for $i=\{1,2\}$ and so
\be
|\Theta(\alpha_i)-c|<\frac\pi2.\nonumber
\ee
As a result there exists a $\delta>0$ for which
\be
\label{thing1}
c-\frac\pi2+\delta<\Theta(\alpha_i)<c+\frac\pi2-\delta.
\ee

We next consider the function 
\be
\rho:=(|s|-1)(|s|-2).\nonumber
\ee
This function is zero on  $\partial M$ and strictly negative on the interior of $M$. Furthermore
\be
 i D\overline{D}\rho=\left(1- \frac3{4|s|}\right) i ds\wedge d\bar s>\frac i4  ds\wedge d\bar s,\nonumber
\ee
which is strictly positive definite on $M$. By a slight abuse of notation we let $\phi_i:=\pi_2^*\phi_i$ be the pullback of our given functions in $\mathcal H$ to $M$. Define the function 
\be
\label{u_1}
u_1(s,z)=u_1(|s|, z):= \phi_1+\rho-C{\rm log}|s|,
\ee
where our notation specifies that $u_1$ only depends on the modulus of $s\in A$.  Choose $C$  large enough so that $u_1(2,z)<\phi_2(z)$. Because $i D\overline{D}\rho$ is positive definite, by \eqref{thing1} it follows that
\be
\ti\Phi\big(\iota(\Lambda_0)+{\rm Hess}(u_1)\big)=\frac\pi 2+ \Theta(\alpha_1) > \frac\pi2+(c-\frac\pi2+\delta)=c+\delta.\nonumber
\ee
So $u_1\in \mathcal F_{\sigma,c+\delta}(M)$. Because ${\rm Hess}(u_1)\notin\mathcal S$,  we have $\ti\Phi=\underline{\ti\Phi}$, and thus $\ti\Phi$ can not drop by $\pi/2$ with a small variation. This implies the distance between $\big(\iota(\Lambda_0)+{\rm Hess}(u_1)\big)$ and $\sim \mathcal F_{\sigma,c}$ is positive, and so $u_1$ is strictly $ \mathcal F_{\sigma,c}$-subharmonic.

Similarly we  define the function 
\be
\label{u_2}
u_2(s,z)=u_2(|s|, z):= \phi_2+\rho+A{\rm log}|s|-B,
\ee
where $A$ and $B$ are chosen so $u_2(2, z)=\phi_2$ and   $u_2(1, z)<\phi_1$. Following the same argument as above we see $u_2$ is also strictly $ \mathcal F_{\sigma,c}$-subharmonic. Now, by Lemma 7.7 in \cite{HL2}, the maximum of these two functions
\be
\underline u:=\max\{u_1,u_2\}\nonumber
\ee
is strictly $ \mathcal F_{\sigma,c}$-subharmonic as well. By construction $\underline u$ is continuous on $M$, and satisfies the correct boundary data. Define the function $\phi$ on $\partial M$ which is equal to $\phi_1$ when $|s|=1$ and $\phi_2$ when $|s|=2$, so $\underline u|_{\partial M}=\phi$.

Consider the Perron set
\be
P(\phi):=\{u\in{\rm USC}(M)\,|\,u\in \mathcal F_{\sigma,c}(M)\,{\rm and}\, u|_{\partial M}\leq \phi\}\nonumber
\ee
This set is non-empty, as it contains $\underline u$. Our next goal is to demonstrate that this set is bounded from above, allowing us to take a supremum.To accomplish this we construct a function which is strictly $ \mathcal F_{\ti\sigma,-c}$-subharmonic, and then apply the weak comparison principle from Proposition \ref{comparison}. 

Our construction is similar to that of $\underline{u}$.  On $M$ define the function 
\be
v_1(s,z)=v_1(|s|,z):= -\phi_1+\rho-C{\rm log}|s|,\nonumber
\ee
where this time $C$ is chosen so $v_1(2,z)<-\phi_2(z)$. Multiplying the inequality \eqref{thing1} by $-1$,  we can conclude
\be
\ti\Phi(-\iota(\Lambda_0)+{\rm Hess}(v_1))=\frac\pi 2- \Theta(\alpha_1) > \frac\pi2+(-c-\frac\pi2+\delta)=-c+\delta.\nonumber
\ee
Thus $v_1\in \mathcal F_{\ti\sigma,-c+\delta}(M)$, which  implies $v_1$ is strictly $ \mathcal F_{\ti\sigma,-c}$-subharmonic. Next, define
\be
v_2(s,z)=v_2(|s|,z):= -\phi_2+\rho+A{\rm log}|s|-B,\nonumber
\ee
where $A$ and $B$ are chosen so $v_2(2,z)=-\phi_2(z)$ and $v_2(1,z)<-\phi_1(z)$. In the same fashion one can show $v_2$ is strictly $ \mathcal F_{\sigma,c}$-subharmonic. Taking the maximum we see
\be
\overline{v}:=\max\{v_1,v_2\}\nonumber
\ee
is strictly $ \mathcal F_{\sigma,c}$-subharmonic. Also, by construction $\overline{v}$ is continuous on $M$, and satisfies $\overline v|_{\partial M}=-\phi$.

Choose any function $u\in P(\phi)$. Because our Perron set specifies boundary values, it is clear $u+\overline{v}\leq 0$ on $\partial M$.  By \eqref{ZMP} we conclude $u+\overline{v}\leq 0$ on $M$, and so any element of $P(\phi)$ is bounded from above by the fixed continuous function $-\overline v$. Thus the Perron function
\be
\Psi:=\sup\{u\,|\,u\in P(\phi)\}\nonumber
\ee
is well defined. To prove our Theorem, we need to demonstrate that this function $\Psi$ solves the Dirichlet problem. As a first step, we show $\Psi$ satisfies the required boundary conditions. 

We will show
\be
\label{boundaryU}
\Psi|_{\partial M}=\phi.
\ee
 Let $\Psi^*$ and $\Psi_*$ denote the upper and lower semicontinuous regularizations of $\Psi$. Because $\underline{u}\in P(\phi)$, by the definition of supremum, we know $\underline{u}\leq \Psi$. This inequality, along with the fact that $\underline{u}$ is continuous, implies  $\underline{u}= \underline{u}_*\leq \Psi_*$. In particular, on $\partial M$ we have $\phi\leq \Psi_*$. Now, we have argued above that, for any $u\in P(\phi)$, by weak comparison $u+\overline{v}\leq0$ on $M$. Taking the supremum over $P(\phi)$ yields $\Psi\leq -\overline v$. Since $\overline v$ is continuous, $\Psi^*\leq -\overline {v}^*=-\overline v. $ Thus on $\partial M$ we have $\Psi^*\leq \phi$. Putting everything together, on $\partial M$ we have
\be
\phi\leq \Psi_*\leq\Psi\leq \Psi^*\leq\phi\nonumber
\ee
which implies \eqref{boundaryU}.

Our next goal is to show $\Psi$ is continuous. By Theorem 2.6 in \cite{HL2}, the upper semicontinuous regularization $\Psi^*$ of $\Psi$ belongs to $\mathcal F_{\sigma,c}(M)$.  Yet we have just demonstrated $\Psi^*|_{\partial M}=\phi$, and  so $\Psi^*\in P(\phi)$. By the definition of supremum this implies $\Psi^*\leq \Psi$, and thus $\Psi=\Psi^*$. To prove continuity, it remains to show that $\Psi_*=\Psi$.

As a first step, we argue that $-\Psi_*\in\mathcal F_{\ti\sigma,-c}$. This follows from Lemma $\ti{\bf F}$ in \cite{HL2}, although we include the details here for the reader's convenience. Via contradiction, assume there exists a point $p\in M$, and function $h\in C^2$, satisfying $-\Psi_*(p)=h(p)$ and 
\be
\label{thing2}
-\Psi_*-h\leq -\epsilon |y-p|^2
\ee
(in local coordinates $y$ near $p$), while at the same time
\be
J^2_ph\notin(\mathcal F_{\ti\sigma,-c})_p.\nonumber
\ee
Since $\mathcal F_{\ti\sigma,-c}$ is a closed set, we have $-J^2_ph\in {\rm Int}( {\mathcal F}_{\sigma,c})_p$. Thus there exists small $r>0$ and $\delta>0$, so that $h_\delta:=-h+\delta$
is ${\mathcal F}_{\sigma,c}$-subharmonic on $B(p,r)$. Moreover, \eqref{thing2} implies that for $\delta$ small enough, $h_\delta<\Psi_*$ in a neighborhood of $\partial B(p,r)$. Since $\Psi_*\leq \Psi$, this implies that the function
\be
h' := \begin{cases}\Psi&\mbox{on } M-B(p,r) \\
\max\{\Psi,h_\delta\} &\mbox{on } \overline{B(p,r)}. \end{cases}\nonumber
\ee
is  ${\mathcal F}_{\sigma,c}$-subharmonic on all of $M$. Because $h'=\Psi=\phi$ on $\partial M$, we have $h'\in P(\phi)$. By definition of supremum it follows that $h'\leq \Psi$ on $M$, and in particular this implies $h_\delta\leq \Psi$ on $B(p,r)$. 

Now, we have already assumed $-\Psi_*(p)=h(p)$, which implies $\Psi_*(p)=h_\delta(p)-\delta$. By definition of the lower semicontinuous regularization
\be
\Psi_*(p)=\liminf_{y\rightarrow p} \Psi(y),\nonumber
\ee
so there exists a sequence of points $y_k$ such that $\Psi(y_k)\rightarrow \Psi_*(p)$. Thus $\Psi(y_k)\rightarrow h_\delta(p)-\delta$, while at the same time, because $h_\delta\in C^2$,  $h_\delta(y_k)\rightarrow h_\delta(p)$. This implies for large enough $k$ that $h_\delta(y_k)>\Psi(y_k)$, a contradiction.  Thus $-\Psi_*\in\mathcal F_{\ti\sigma,-c}(M)$.

Our next goal is to apply the comparison principle to prove continuity of $\Psi$. Specifically, $\Psi-\Psi_*=0$ on $\partial M$, and $\Psi\in \mathcal F_{\sigma,c}(M)$, so naively one may hope comparison holds right away to conclude $\Psi-\Psi_*\leq 0$ on $M$, proving $\Psi\leq \Psi_*$. Unfortunately, in our setting, comparison only holds for either strictly-$\mathcal F_{\sigma,c}$, or strictly-$\mathcal F_{\ti\sigma,-c}(M)$  subharmonic functions, which neither $\Psi$ nor $\Psi_*$ are.  Thus we first approximate $\Psi$ uniformly by a sequence of strictly subharmonic functions, and then apply Proposition \ref{comparison} to this sequence.  Because our fibers over $A$ are the compact manifold $X$, there are no global subharmonic functions on $M$ with which one can perform this approximation. This is in contrast to a domain in $\mathbb C^n$, where, for example, one can use $\epsilon |z|^2$. Instead we construct our approximation  using the convexity of the level sets of $\Phi(\cdot)$. As mentioned above this is one of the key differences between our proof and the approach in \cite{RS}.

In order to construct our uniform approximation, we use the functions $u_1$ and $u_2$ from \eqref{u_1} and \eqref{u_2}. We show that, for each $i$ and all $\epsilon>0$, 
\be
\Psi_{i,\epsilon}:=(1-\epsilon)\Psi+\epsilon u_i\nonumber
\ee
is a strictly $\mathcal F_{\sigma,c}$-subharmonic function. By definition, since $\Psi$ is only upper-semicontinuous, we need to verify that for any $C^2$ function $f_\epsilon$ which satisfies $f_\epsilon(p)=\Psi_{i,\epsilon}(p)$, and $f_\epsilon\geq\Psi_{i,\epsilon}$ near $p$, one has  $J^2_p(f_\epsilon)\in \mathcal F_{\sigma,c}^\delta$ for some $\delta>0$. Define
\be
 f:=\frac{f_\epsilon-\epsilon u_i}{1-\epsilon},\nonumber
\ee
which is also a $C^2$ function. We see that $f_\epsilon$ touches the graph of $\Psi_{i,\epsilon}$ from above at the point $p$ if and only if $ f$ touches the graph of $\Psi$ from above at $p$. Since  $\Psi\in \mathcal F_{\sigma,c}(M)$, by definition $J^2_p(f)\in \mathcal F_{\sigma,c}$. Furthermore, by construction $J^2_p(u_i)\in{\rm int}\left( \mathcal F_{\sigma,c}\right).$ Now, recall $ \mathcal F_{\sigma,c}$ is locally jet equivalent to the subequation
\be
\cF_c:=\{A\in{\rm Sym}^2(\mathbb R^{2n+2})\,|\,\ti\Phi(A)\geq c\}.\nonumber
\ee
By Lemma \ref{convex}, the level sets of  $\ti\Phi(\cdot)$ are convex, and since $f_\epsilon$ is on the path connecting $f$ to $u_i$,  we conclude $J^2_p(f_\epsilon)\in {\rm int}(\mathcal F_{\sigma,c})$ for all $0<\epsilon\leq1$. Thus, we can always find a $\delta>0$ so that  $J^2_p(f_\epsilon)\in \mathcal F_{\sigma,c}^\delta$, proving $\Psi_{i,\epsilon}$ is strictly $\mathcal F_{\sigma,c}$-subharmonic for $i\in\{1,2\}.$

Define the function $\Psi_\epsilon:=\max\{\Psi_{1,\epsilon},\Psi_{2,\epsilon}\}$, which by the definition of $\underline{u}$ is equal to $(1-\epsilon)\Psi+\epsilon\underline{u}$. Thus, not only does $\Psi_\epsilon$ satisfy the boundary condition $\Psi_\epsilon|_{\partial M}=\phi$, but because $\underline u$ is continuous and $M$ is compact, for any $\epsilon'>0$, we can choose $\epsilon$ small enough to guarantee 
\be
\Psi-\epsilon'\leq \Psi_\epsilon\leq \Psi+\epsilon'.\nonumber
\ee
On $\partial M$, we then have
\be
\Psi_\epsilon-\epsilon'-\Psi_*\leq\Psi-\Psi_*\leq 0.\nonumber
\ee
 By Lemma 7.7 in \cite{HL2}, $\Psi_\epsilon$ is strictly $\mathcal F_{\sigma,c}$-subharmonic. Since subtracting a small constant does not change the Hessian,  $\Psi_\epsilon-\epsilon'$ is strictly $\mathcal F_{\sigma,c}$-subharmonic as well. We now apply the weak comparison principle, Proposition \ref{comparison}, to conclude
\be
\Psi_\epsilon-\epsilon'\leq \Psi_* \,\,\,{\rm on}\,\,\, M.\nonumber
\ee
Yet this implies $\Psi-2\epsilon'\leq\Psi_*$, for any $\epsilon'>0$. Thus $\Psi\leq\Psi_*$.

This demonstrates that $\Psi_*=\Psi=\Psi^*$, and so $\Psi$ is continuous. Furthermore $\Psi$ satisfies the boundary condition $\Phi|_{\partial M}=\phi$, as well as 
\be
\Psi\in \mathcal F_{\sigma,c}(M)\cap-\mathcal F_{\ti\sigma,-c}(M).\nonumber
\ee
Thus $\Psi$ solves the Dirichlet problem \eqref{geos2}.

To conclude, we demonstrate that $\Psi$ satisfies \eqref{geos3}. In fact, this follows from comparing the space-time angle of a given matrix to the standard angle along space-like slices. Specifically, given any $A\in {\rm Sym}^2(\mathbb R^{2n+2})\backslash \mathcal S$, using the matrix $B^{\mathbb C}$ from the proof of Lemma \ref{matrixfun} (with $\eta=0$), the associated angles can be expressed as
\be
\Phi(A):={\rm arg\,det}(B^{\mathbb C})\qquad{\rm and}\qquad \Theta(A^+)={\rm arg\,det}(B^{\mathbb C}_+).\nonumber
\ee
Using equation \eqref{detB}, we can write the difference as
\bea
\Phi(A)-\Theta(A^+)&=&{\rm arg}\left({\rm det}(B^{\mathbb C}){\rm det}(B^{\mathbb C}_+)^{-1}\right)\nonumber\\
&=&{\rm arg}\left(i{\bf a}_{11}+\vec{\bf a}_1 (B_+^{\mathbb C})^{-1} {\vec{ \bf a}}_1^*\right).\nonumber
\eea
Since $i{\bf a}_{11}+\vec{\bf a}_1 (B_+^{\mathbb C})^{-1} {\vec{ \bf a}}_1^*$ has non-negative real part, we conclude
\be
\Phi(A)-\Theta(A^+)\in[-\frac\pi2,\frac\pi2].\nonumber
\ee
Following Lemma 3.7 in \cite{RS}, using upper semi-continuity along a path of matrices, we can prove that lifted angle $\ti\Phi$ satisfies
\be
|\ti \Phi(A)-\Theta(A^+)|\leq\frac\pi2,\nonumber
\ee
and this extends to the case that $A\in \mathcal S$.

Thus, touching the graph of $\Psi$ above and below by $C^2$ functions, and computing the associated angles $\ti \Phi$, the above angle bounds easily give the desired control of the standard angle $\Theta$, demonstrating that $\Psi$ satisfies \eqref{geos3}.

\end{proof}

\end{normalsize}
\newpage

\end{document}